\newtheorem{theorem}{Theorem}
\newtheorem{lemma}[theorem]{Lemma}
\newtheorem{proposition}[theorem]{Proposition}
\renewenvironment{proof}{\noindent{\bf Proof.}}{\hspace*{2mm}~$\square$}
\newcommand{\N}{\mathbb{N}}
\newcommand{\Z}{\mathbb{Z}}
\newcommand{\Lat}{\mathscr{L}}
\newcommand{\A}{\mathscr{A}}
\newcommand{\B}{\mathscr{B}}
\newcommand{\C}{\mathscr{C}}
\newcommand{\norm}[1]{|\!|#1|\!|}
\newcommand{\ep}{\epsilon}
\newcommand{\n}{\hspace*{-5pt}}
\DeclareMathOperator{\geometric}{Geometric}
\DeclareMathOperator{\bernoulli}{Bernoulli}
\DeclareMathOperator{\poisson}{Poisson}
\DeclareMathOperator{\exponential}{exponential}
\DeclareMathOperator{\card}{card}
\begin{document}

\begin{frontmatter}
\title{Multitype contact process with sterile states}
\runtitle{Multitype contact process with sterile states}
\author{Nicolas Lanchier, Max Mercer and Hyunsik Yun}
\runauthor{Nicolas Lanchier, Max Mercer and Hyunsik Yun}
\address{School of Mathematical and Statistical Sciences \\ Arizona State University \\ Tempe, AZ 85287, USA. \\ nicolas.lanchier@asu.edu \\ mamerce1@asu.edu \\ hyun26@asu.edu}
\maketitle

\begin{abstract} \
 This paper considers a natural variant of the $d$-dimensional multitype contact process in which individuals can be fertile or sterile.
 Fertile individuals of type $i$ give birth to an offspring of their own type at rate $\lambda_i$, the offspring being fertile with probability $p_i$ and sterile with probability $1 - p_i$, whereas sterile individuals can't give birth.
 Offspring are sent to one of the neighbors of their parent's location and take place in the system if and only if the target site is empty.
 All the individuals die at rate one regardless of their type and regardless of whether they are fertile or sterile.
 Our main results show some qualitative disagreements between the spatial model and its nonspatial mean-field approximation that are more pronounced when the probability $p_i$ is small.
 More precisely, for the mean-field model, in the presence of only one type, survival occurs when $\lambda_i p_i > 1$, and in the presence of two types, the type with the largest $\lambda_i p_i$ wins.
 In contrast, though the analysis of the spatial model shows a similar behavior when $p_i$ is close to one, in the presence of only one type, extinction always occurs when $p_i < 1/4d$.
 Similarly, a type with $\lambda_i > \lambda_c =$ critical value of the contact process and $p_i = 1$ is more competitive than a type with $\lambda_i$ arbitrarily large but $p_i < 1/4d$, showing that the product $\lambda_i p_i$ no longer measures the competitiveness.
 These results underline the effects of space in the form of local interactions.
\end{abstract}

\begin{keyword}[class=AMS]
\kwd[Primary ]{60K35.}
\end{keyword}

\begin{keyword}
\kwd{Contact process; Multitype contact process; Coupling; Block construction; Oriented site percolation; Galton-Watson branching process; Exponential decay.}
\end{keyword}

\end{frontmatter}


\section{Introduction}
\label{sec:intro}
 Neuhauser's multitype contact process~\cite{neuhauser_1992} models the competition between two different types of individuals on the~$d$-dimensional integer lattice.
 Each site of the lattice is either in state~0~=~empty, state~1~=~occupied by a type~1 individual, or state~2~=~occupied by a type~2 individual, so the state of the process at time~$t$ is a spatial configuration~$\xi_t : \Z^d \to \{0, 1, 2 \}$.
 Individuals of type~$i$ give birth at rate~$\lambda_i$ and die at rate one.
 Offspring are randomly sent to one of the~$2d$ neighbors of their parent's location, and take place in the system if the target site is empty.
 In particular, the local transition rates of the process at site~$x \in \Z^d$ are given by
 $$ \begin{array}{rclcrcl}
      0 \to 1 & \hbox{at rate} & \lambda_1 f_1 (x, \xi), & \qquad & 1 \to 0 & \hbox{at rate} & 1, \vspace*{4pt} \\
      0 \to 2 & \hbox{at rate} & \lambda_2 f_2 (x, \xi), & \qquad & 2 \to 0 & \hbox{at rate} & 1, \end{array} $$
 where~$f_i (x, \xi)$ denotes the fraction of neighbors of site~$x$ that are in state~$i$.
 In the presence of only one type, the process reduces to Harris' contact process~\cite{harris_1974}.
 For an overview of the contact process and the multitype contact process, as well as variants of interest in epidemiology, population ecology, and community ecology, we refer the reader to~\cite[Chapters~1--4]{lanchier_2024}.
 This paper introduces another variant of the~(multitype) contact process of interest in ecology, in which individuals can now be fertile or sterile.
 Fertile individuals of type~$i$ again give birth at rate~$\lambda_i$, while sterile individuals cannot give birth, and the offspring are independently fertile with probability~$p_i$ and sterile with probability~$q_i = 1 - p_i$.
 Using state~$+i$ for fertile individuals of type~$i$, and state~$-i$ for sterile individuals of type~$i$, the local transition rates become
\begin{equation}
\label{eq:IPS}
\begin{array}{rclcrcl}
  0 \to +1 & \hbox{at rate} & \lambda_1 p_1 f_{+1} (x, \xi), & \qquad & +1 \to 0 & \hbox{at rate} & 1, \vspace*{4pt} \\
  0 \to -1 & \hbox{at rate} & \lambda_1 q_1 f_{+1} (x, \xi), & \qquad & -1 \to 0 & \hbox{at rate} & 1, \vspace*{4pt} \\
  0 \to +2 & \hbox{at rate} & \lambda_2 p_2 f_{+2} (x, \xi), & \qquad & +2 \to 0 & \hbox{at rate} & 1, \vspace*{4pt} \\
  0 \to -2 & \hbox{at rate} & \lambda_2 q_2 f_{+2} (x, \xi), & \qquad & -2 \to 0 & \hbox{at rate} & 1. \end{array}
\end{equation}
 Neuhauser's multitype contact process is the particular case where~$p_1 = p_2 = 1$, while Harris' contact process is obtained by also assuming that only one type is present.
 To simplify the notation when dealing with the process with only one type, we will drop the label referring to the type, and write the states as~$+$ and~$-$ and the parameters of the system as~$\lambda, p, q$. \vspace*{5pt}


\noindent{\bf Mean-field model}.
 To understand the effects of local interactions, we first study the nonspatial deterministic mean-field approximation of the stochastic process.
 Assuming that the population is well-mixing, and letting~$u_{\pm i}$ denote the density of sites in state~$\pm i$, the mean-field model of the process~\eqref{eq:IPS} is given by the system of coupled differential equations
 $$ \begin{array}{rclcrcl}
      u_{+1}' & \n = \n & \lambda_1 p_1 u_{+1} u_0 - u_{+1}, \qquad & u_{-1}' & \n = \n & \lambda_1 q_1 u_{+1} u_0 - u_{-1}, \vspace*{4pt} \\
      u_{+2}' & \n = \n & \lambda_2 p_2 u_{+2} u_0 - u_{+2}, \qquad & u_{-2}' & \n = \n & \lambda_2 q_2 u_{+2} u_0 - u_{-2}, \end{array} $$
 where~$u_0 = 1 - u_{+1} - u_{-1} - u_{+2} - u_{-2}$ is the density of empty sites.
 In the presence of only one type, we will prove that the population survives if and only if~$\lambda p > 1$ in the sense that there is a unique interior fixed point~(that is globally stable) in the two-dimensional simplex given by
\begin{equation}
\label{eq:Q}
  Q = (u_+, u_-) = \bigg(\bigg(1 - \frac{1}{\lambda p} \bigg) p, \bigg(1 - \frac{1}{\lambda p} \bigg) q \bigg)
\end{equation}
 if and only if~$\lambda p > 1$.
 This implies that, in the presence of two types, when~$\lambda_1 p_1, \lambda_2 p_2 > 1$, there are two nontrivial boundary fixed points given by
 $$ \begin{array}{rcl}
      Q_1 = (u_{+1}, u_{-1}, u_{+2}, u_{-2}) & \n = \n &
    \displaystyle \bigg(\bigg(1 - \frac{1}{\lambda_1 p_1} \bigg) p_1, \bigg(1 - \frac{1}{\lambda_1 p_1} \bigg) q_1, 0, 0 \bigg), \vspace*{8pt} \\
      Q_2 = (u_{+1}, u_{-1}, u_{+2}, u_{-2}) & \n = \n &
    \displaystyle \bigg(0, 0, \bigg(1 - \frac{1}{\lambda_2 p_2} \bigg) p_2, \bigg(1 - \frac{1}{\lambda_2 p_2} \bigg) q_2 \bigg). \end{array} $$
 We will prove that type~1 wins if in addition~$\lambda_1 p_1 > \lambda_2 p_2$~(and type~2 wins if the inequality is reversed) in the sense that~$Q_1$ and~$Q_2$ are the only two nontrivial fixed points,~$Q_1$ is locally stable, and~$Q_2$ is unstable.
 In the neutral case~$\lambda_1 p_1 = \lambda_2 p_2 > 1$, coexistence occurs in the sense that the segment line connecting~$Q_1$ and~$Q_2$ is a locally stable collection of fixed points. \vspace*{5pt}


\noindent{\bf Spatial model}.
 Our analysis of the stochastic process shows that the inclusion of space in the form of local interactions strongly affects the qualitative behavior of the system, with more pronounced effects as~$p$ gets smaller.
 In the nonspatial mean-field model, survival of a single type occurs if and only if~$\lambda p > 1$, and the winner in the presence of two types is the type with the highest~$\lambda_i p_i$ product.
 In contrast, in the spatial model, the population survives when~$\lambda p > \lambda_c =$ critical value of the contact process and~$p$ is close to one, but when~$p$ is smaller than a critical value~$p_- \geq 1/4d$, the population dies out even when~$\lambda = \infty$.
 Similarly, a type with~$\lambda > \lambda_c$ and~$p = 1$ wins against a type that has~$p < p_-$ and~$\lambda = \infty$, showing that the type with the highest~$\lambda_i p_i$ is not always the most competitive.
 To give a rigorous statement of our results, starting from a translation-invariant configuration with a positive density of~$+i$s, we say that
 $$ \begin{array}{rll}
    \hbox{type~$i$ survives when} & \lim_{t \to \infty} P (\xi_t (x) = +i) > 0 & \hbox{for all} \ x \in \Z^d, \vspace*{4pt} \\
    \hbox{type~$i$ dies out when} & \lim_{t \to \infty} P (\xi_t (x) = +i) = 0 & \hbox{for all} \ x \in \Z^d. \end{array} $$
 In the presence of two types, we say that type~$i$ wins when type~$i$ survives while the other type dies out, and that both types coexist when they both survive. \\
\indent
 We first study the survival/extinction phase of the single-type process.
 For the basic contact process~(the particular case~$p = 1$), Harris~\cite{harris_1974} proved the existence of a nondegenerate critical value~$\lambda_c$ that depends on the spatial dimension such that the process dies out when~$\lambda < \lambda_c$ but survives when~$\lambda > \lambda_c$.
 Using a block construction supplemented with a continuity argument, Bezuidenhout and Grimmett~\cite{bezuidenhout_grimmett_1990} also proved that extinction occurs at~$\lambda = \lambda_c$.
 For general values of the probability~$p$, because fertile individuals give birth to fertile individuals at rate~$\lambda p$ but also to sterile individuals blocking the spread of the population at rate~$\lambda q$, extinction should occur when~$\lambda p \leq \lambda_c$.
 Using standard coupling techniques to compare our process with the basic contact process with parameter~$\lambda p$ confirms extinction in this case.
\begin{theorem}
\label{th:coupling}
 $\lambda p \leq \lambda_c \,\Rightarrow$ extinction.
\end{theorem}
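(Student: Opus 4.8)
The plan is to construct a coupling between the single-type process $\xi_t$ with parameters $\lambda$ and $p$ (all individuals of the same type, states $+$ and $-$) and the basic contact process $\zeta_t$ with parameter $\lambda p$, in such a way that every site occupied by a fertile individual in $\xi_t$ is occupied in $\zeta_t$, i.e. $\xi_t(x) = +1 \Rightarrow \zeta_t(x) = 1$. To do this I would use a single graphical representation on $\Z^d$: for each ordered pair of neighbors $(x,y)$, attach a Poisson process of birth arrows at rate $\lambda/2d$ (so the total birth rate out of a site is $\lambda$), each arrow independently labeled \emph{fertile} with probability $p$ and \emph{sterile} with probability $q = 1-p$; attach to each site an independent rate-one Poisson process of death marks. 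The process $\xi_t$ reads off all arrows (a fertile arrow from an occupied $x$ to an empty $y$ puts $+1$ at $y$; a sterile arrow puts $-1$ at $y$), while $\zeta_t$ is built from the \emph{fertile arrows only}, which by the thinning property of Poisson processes form independent rate-$\lambda p/2d$ processes — exactly the graphical representation of the contact process with parameter $\lambda p$.

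Next I would verify that the desired domination $\{x : \xi_t(x) = +1\} \subseteq \{x : \zeta_t(x) = 1\}$ is preserved by the dynamics, proceeding by checking the effect of each type of graphical event. A death mark at $x$ removes $x$ from both occupied sets simultaneously, so inclusion is maintained. A sterile arrow never affects $\zeta_t$ and can only turn an empty site of $\xi_t$ into a $-1$ (never a $+1$), so it cannot break the inclusion. A fertile arrow from $x$ to $y$: if it is active for $\zeta$ it requires $\zeta_t(x)=1$; whether or not it creates a $+1$ at $y$ in $\xi_t$, it does create a $1$ at $y$ in $\zeta_t$ provided $y$ was empty for $\zeta$, and the only way $y$ could fail to become $+1$ in $\xi_t$ is that $y$ was already occupied in $\xi_t$ — but then by the inductive hypothesis $y$ was already occupied in $\zeta_t$ as well. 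In every case the inclusion of $+1$-sites in $1$-sites is preserved, so it holds for all $t$ whenever it holds at time $0$, which we can arrange (e.g. start $\zeta_0 = \Z^d$, or couple initial configurations by putting a $1$ in $\zeta_0$ at every site where $\xi_0$ has a $+1$).

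Finally I would combine this domination with the known extinction results for the contact process. When $\lambda p < \lambda_c$, Harris' theorem gives that $\zeta_t$ dies out, hence $P(\xi_t(x) = +1) \le P(\zeta_t(x) = 1) \to 0$; so the fertile individuals disappear, and since sterile individuals are produced only by fertile ones and themselves die at rate one, the whole population dies out as well (once there are no fertile individuals left, no new individuals of any kind are ever born, and the finitely-visited sterile ones die off). When $\lambda p = \lambda_c$, the same argument applies using the Bezuidenhout--Grimmett result that the contact process dies out at criticality. This covers the case $\lambda p \le \lambda_c$ and completes the proof. The only mildly delicate point — and the one I would write out carefully — is the thinning step, namely that restricting the graphical representation to fertile arrows really does yield the standard contact process at parameter $\lambda p$ with the \emph{same} death marks; everything else is a routine monotone coupling argument.
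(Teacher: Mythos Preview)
Your argument is correct and follows exactly the paper's approach: build the contact process $\eta$ (your $\zeta$) from the same graphical representation using only the fertile/$+$ arrows and the death marks, verify by a case check that the inclusion $\{\xi_t=+\}\subseteq\{\eta_t=1\}$ is preserved by every mark, and conclude via Bezuidenhout--Grimmett at $\lambda p\le\lambda_c$. One small wording issue to clean up: your clause ``by the inductive hypothesis $y$ was already occupied in $\zeta_t$'' is not guaranteed when $\xi_t(y)=-$, but it is also unnecessary---in that case $y$ does not become $+$ in $\xi$, so there is nothing to check.
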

\noindent
 This result is in agreement with extinction in the mean-field model when~$\lambda p < 1$.
 We now look at the behavior of the system when~$\lambda p > \lambda_c$, starting with the case where~$p$ is close to one.
 Using block constructions, Bezuidenhout and Grimmett~\cite{bezuidenhout_grimmett_1990}, and later Durrett and Neuhauser~\cite{durrett_neuhauser_1997}, proved that the supercritical contact process properly rescaled in space and time dominates oriented site percolation with parameter arbitrarily close to one.
 Once the space and time scales are fixed, large deviation estimates imply that the probability of a birth of a sterile individual in a space-time block can be made arbitrarily small by choosing~$p$ close to one, thus showing that the stochastic domination still holds and extending the survival region to~$p < 1$.
\begin{theorem}
\label{th:survival}
 For all~$\lambda > \lambda_c$, there exists~$p_+ = p_+ (\lambda) < 1$ such that~$p > p_+ \,\Rightarrow$ survival.
\end{theorem}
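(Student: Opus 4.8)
The plan is to run a block construction showing that, suitably rescaled in space and time, the set of fertile individuals dominates a supercritical oriented site percolation, and then to transfer the resulting positive density of~$+$s back to the lattice. Throughout I would use the graphical representation: to each directed edge~$(y,x)$ of~$\Z^d$ attach a rate~$\lambda/2d$ Poisson process of birth arrows and to each site a rate one Poisson process of death marks, and in addition, independently, declare each arrow \emph{fertile} with probability~$p$ and \emph{sterile} with probability~$q = 1 - p$. Ignoring the fertile/sterile marks gives the basic contact process with parameter~$\lambda$; reading them with the rule that only a~$+$ reproduces, a fertile arrow from a~$+$ to an empty site producing a~$+$ and a sterile one producing a~$-$, gives the process~\eqref{eq:IPS} with a single type. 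The observation driving the whole argument is this: inside any space--time region, if no sterile arrow fires from a~$+$ to an empty site and no~$-$ is present when the region begins, then, started from the same configuration, our process and the contact process have exactly the same set of occupied/fertile sites there, because every productive arrow is then fertile, hence produces a genuine contact-process particle, while death marks act identically.

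First I would invoke the block construction of Bezuidenhout--Grimmett~\cite{bezuidenhout_grimmett_1990}, in the form used by Durrett--Neuhauser~\cite{durrett_neuhauser_1997}: since~$\lambda > \lambda_c$, there are scales~$L, T$ and a finite integer~$M$ such that the contact process with parameter~$\lambda$, viewed on the space--time blocks indexed by the renormalized lattice, dominates an~$M$-dependent oriented site percolation in which each site is open with probability at least~$1 - \delta$, the event ``site~$z$ open'' being measurable with respect to the arrows and death marks in a bounded space--time region~$R_z$ of fixed shape, and where~$\delta > 0$ can be made as small as we wish at the cost of enlarging~$L, T, M$. This construction has the restart property: on the event that~$z$ is open, our process being fully~$+$ on a seeding sub-box of~$R_z$ forces it to be fully~$+$ on the seeding sub-boxes of the neighbors of~$z$. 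I would fix~$\delta$ small enough that, by the Liggett--Schonmann--Stacey comparison, an~$M$-dependent percolation with open probability~$\geq 1 - \delta$ dominates a supercritical independent oriented site percolation.

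Next I would enlarge the good event: declare~$z$ \emph{open for our process} if~$z$ is open for the contact process \emph{and} no sterile arrow occurs in~$R_z$ (taken with a fixed spatial and temporal buffer). The number of birth arrows in this enlarged region is Poisson with a fixed mean~$m$ depending only on~$\lambda, L, T$ and the shape of the region, so the probability that every one of them is fertile equals~$e^{-mq}$ (it is~$E[p^N]$ for~$N$ Poisson with mean~$m$), which tends to~$1$ as~$p \to 1$; hence~$z$ is open for our process with probability at least~$1 - \delta - (1 - e^{-mq})$, and this is still an event measurable with respect to a bounded space--time region, so the open sites form an~$M$-dependent percolation of the same type. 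Choosing first~$\delta$ and then~$p_+ = p_+(\lambda) < 1$ close enough to~$1$ that~$1 - e^{-m(1-p_+)} < \delta$, the open sites dominate a supercritical oriented site percolation for every~$p > p_+$. On the enlarged good event, the absence of sterile arrows in the buffered region, together with the fact that sterile individuals do not move and die at rate one, keeps stray~$-$s away from the seeding sub-boxes, so the coupling of the first paragraph applies block by block and the open percolation cluster carries a genuine~$+$-population in our process.

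Finally I would run the standard consequence: supercritical oriented site percolation survives with positive probability, and starting from any translation-invariant configuration with a positive density of~$+$s the usual comparison then gives~$\liminf_{t \to \infty} P (\xi_t (x) = +) > 0$ for all~$x$, which is survival. The conceptual content is routine; the step that requires genuine care is the middle one, namely checking that on the enlarged good event our process really does track the contact process inside every block of the cluster. The only way this can fail is through a~$-$ present at the bottom of a block but created by a sterile arrow in a neighboring, possibly non-open, block; taking~$R_z$ with a large enough spatial buffer and using the finite speed of propagation of the contact process, which stochastically dominates the full occupied set of our process, confines such individuals away from the seeding sub-boxes and makes the coupling go through.
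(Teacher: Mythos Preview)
Your proposal is correct and follows essentially the same route as the paper: invoke the Durrett--Neuhauser block construction for the supercritical contact process, then perturb by intersecting each good event with $\{\hbox{no sterile arrows in the block}\}$, whose probability is $e^{-\Lambda(1-p)}\to 1$ as $p\to 1$, and conclude via $M$-dependent oriented percolation. The one place your treatment differs is the boundary issue you flag at the end: rather than a spatial buffer plus a finite-speed-of-propagation estimate, the paper builds the condition ``no $-1$s at the bottom of the block'' directly into the seed event $E_{m,n}^1$, so that once no sterile arrows fire inside $\B_{m,n}^1$ this condition is inherited by the next level without any additional probabilistic argument --- a cleaner device that you may want to adopt.
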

\noindent
 This result is in agreement with survival in the mean-field model when~$\lambda p > 1$.
\begin{figure}[t]
\centering
\scalebox{0.42}{\input{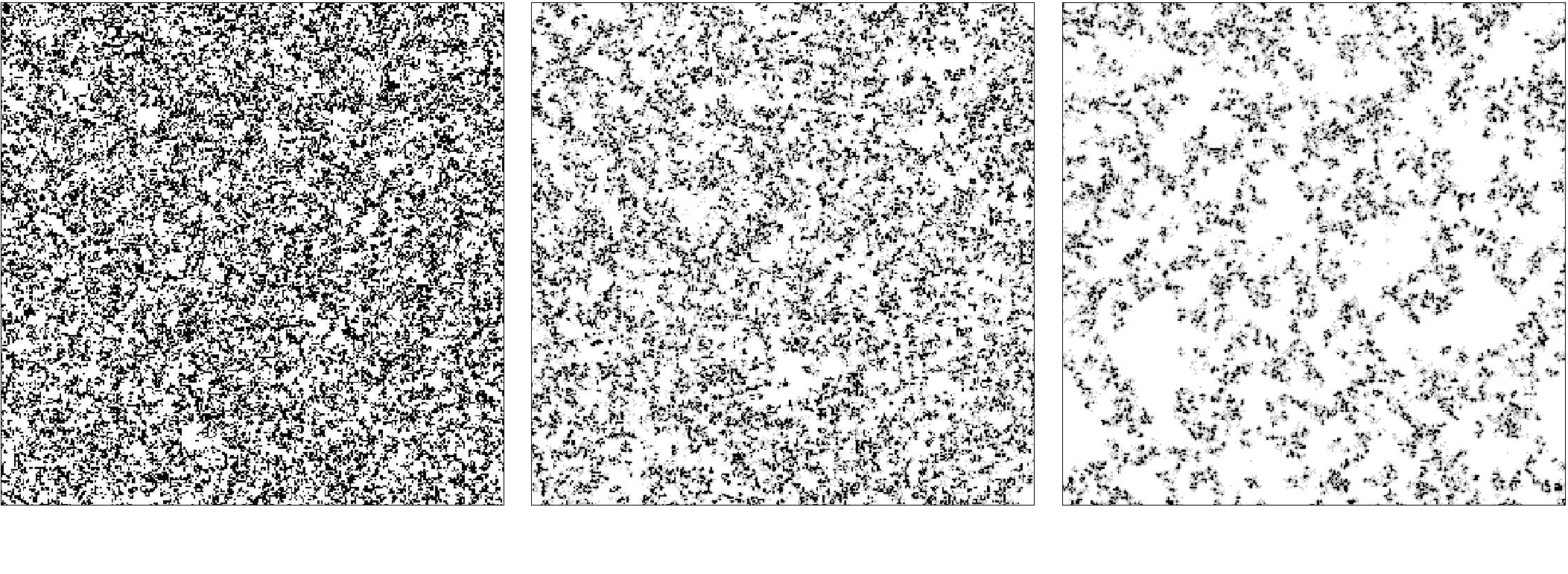_t}}
\vspace*{-5pt}
\caption{\upshape Snapshots at time~1$00$ of the contact process with a sterile state.
                  The black sites represent the fertile individuals, the gray sites the sterile individuals, and the white sites the empty sites.}
\label{fig:sterile}
\end{figure}
 To study the process when~$\lambda p > \lambda_c$ with~$p > 0$ small, notice that the number of deaths in the neighborhood of a fertile individual during its lifespan is dominated by~$N =$ shifted geometric random variable with mean~$2d$, so the number of offspring produced by this individual is dominated by~$2d + N$.
 Since in addition each offspring is independently fertile with probability~$p$, we deduce that the expected number of~$+1$s produced by a single~$+1$ is bounded by
 $$ p E (2d + N) = p (2d + 2d) = 4dp. $$
 In particular, when~$p < 1/4d$, the number of~$+1$s in the process starting from a single~$+1$ is dominated by the total progeny of a subcritical Galton-Watson branching process, which shows extinction.
 To deal with more general initial configurations and study the multitype process later, we prove an exponential decay of the total progeny of the branching process and use a block construction to also show an exponential decay of the spatial model. 
\begin{theorem}
\label{th:extinction}
 There exists~$p_- \geq 1/4d$ such that~$p < p_- \,\Rightarrow$ extinction for all~$\lambda$.
\end{theorem}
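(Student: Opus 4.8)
The plan is to dominate the population of fertile individuals by a subcritical Galton--Watson process whose offspring law is independent of~$\lambda$, and then to convert the finiteness of its total progeny into extinction. Working in the graphical representation, I would fix a fertile individual occupying some site~$x$ during a time interval~$I$ of~$\exponential(1)$ length and bound the number of individuals it produces during~$I$. Each birth from~$x$ fills a neighbor that is empty at that instant, and a neighbor of~$x$ can be filled at most once before the first death there, and at most once between consecutive deaths there; hence the number of births landing on neighbors of~$x$ during~$I$, a fortiori the number of offspring of this individual, is at most~$(\text{number of neighbors empty at the start of }I) + N \leq 2d + N$, where~$N$ counts the deaths in the neighborhood of~$x$ during~$I$. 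Since deaths occur at rate one at each occupied site \emph{regardless of~$\lambda$}, $N$~is stochastically dominated by a~$\poisson(2d\,|I|)$ variable mixed over~$|I| \sim \exponential(1)$, i.e.\ by a geometric variable with mean~$2d$. Because each offspring is independently fertile with probability~$p$, the number of \emph{fertile} offspring is dominated by a sum of~$2d + N$ independent~$\bernoulli(p)$ variables, whose mean is at most~$p\,E(2d + N) = 4dp$. Attaching an independent such variable to each fertile individual, the set of fertile individuals ever present, starting from a single fertile individual, is stochastically dominated by the total progeny~$|\C|$ of a Galton--Watson tree~$\C$ with this offspring law; when~$p < 1/4d$ its mean~$4dp < 1$, so~$\C$ is subcritical --- and, crucially, uniformly in~$\lambda$, which is the point of departure from the mean-field model.

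Next I would record an exponential decay. The offspring law is a geometric mixture of binomials, hence has finite exponential moments near~$0$, and combined with subcriticality a standard exponential Chebyshev estimate (or the Otter--Dwass formula) yields~$P(|\C| \geq n) \leq C e^{-\gamma n}$. From this the single-source behavior of the spatial model follows uniformly in~$\lambda$: a fertile descendant at distance~$r$ requires at least~$r$ generations, so all fertile descendants ever produced lie in the ball of radius~$|\C|$, whose tail is exponential; and the time at which the last fertile individual dies is controlled by the depth of~$\C$ together with the~$\exponential(1)$ lifetimes, hence is almost surely finite and in fact exponentially tailed. Once the fertile population is gone, no further birth occurs and the finitely many remaining sterile individuals die at rate one, so, started from a single fertile individual, the process reaches the empty configuration almost surely, with~$P(\xi_t \not\equiv 0)$ decaying exponentially in~$t$, uniformly in~$\lambda$.

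To handle a general translation-invariant initial configuration with density~$\rho_0 > 0$ of~$+$s, I would argue directly. If~$\xi_t(0) = +$, then~$0$ is a fertile descendant at time~$t$ of some initially fertile site~$y$, so~$P(\xi_t(0) = +) \leq \sum_y P(0 \in V_y(t)) = E(|V_0(t)|)$ by translation invariance, where~$V_y(t)$ denotes the set of fertile descendants of~$y$ alive at time~$t$; and on~$\{\xi_0(0) = +\}$ one has~$|V_0(t)| \leq |\C|\,\mathbf{1}\{\text{the fertile family of }0\text{ is not extinct at time }t\}$, whose expectation tends to~$0$ by dominated convergence since~$E(|\C|) < \infty$. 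The analogous bound for~$\xi_t(0) = -$, using the total progeny of all descendants (fertile and sterile alike), which is again integrable, then gives~$P(\xi_t(0) \neq 0) \to 0$, that is, extinction. The stronger exponential decay of the spatial model, needed later for the multitype process, is instead obtained from a block construction: one tiles space into boxes of side~$L$, declares a box \emph{bad} when some fertile activity originating inside it reaches a non-adjacent box or survives for time~$T$ --- an event of probability at most~$L^d C e^{-\gamma L}$ by the previous paragraph --- and compares the resulting finite-range dependent field of bad boxes with a subcritical Bernoulli site percolation.

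The conceptual heart of the proof is the first step: the supply of empty neighbors, and hence the dominating branching law, is governed by \emph{deaths}, which occur at rate one independently of~$\lambda$, so that subcriticality for~$p < 1/4d$ is completely uniform in~$\lambda$; this is exactly what produces the qualitative disagreement with the mean-field model. The technical obstacle will be the block construction, for in contrast with the usual supercritical contact process there is no finite speed of propagation here --- when~$\lambda$ is large, fertile influence can travel far in arbitrarily short time --- so the badness of a box must be defined through the branching and genealogical structure rather than through spatial locality, and the box estimates must be uniform in~$\lambda$. Some care is also required because the domination by~$\C$ is only stochastic, obtained from a bound on offspring numbers that holds conditionally on the rest of the configuration; the coupling, with independent geometric variables and fertility coins attached to each fertile individual, and the definition of a bad box purely in terms of the graphical representation (so that re-entry of activity from neighboring boxes is accounted for) have to be arranged with this in mind.
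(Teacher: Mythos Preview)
Your core strategy is exactly the paper's: dominate the fertile population started from a single~$+$ by a Galton--Watson tree whose offspring law has mean~$p(2d+E(N))=4dp$, with~$N$ a geometric of mean~$2d$ counting neighbor deaths during an~$\exponential(1)$ lifespan; subcriticality for~$p<1/4d$ is then uniform in~$\lambda$. Your exponential tail for the total progeny and the space/time decay of the single-source process~$\xi^{(0,0)}$ match the paper's Lemmas~\ref{lem:offspring}--\ref{lem:decay} essentially line for line.

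Where you diverge is in passing to general translation-invariant initial data. The paper jumps straight to a block construction (Lemma~\ref{lem:empty} and Proposition~\ref{prop:decay}), which simultaneously gives extinction and the exponential decay needed for Theorem~\ref{th:mcp}. Your direct argument, bounding~$P(\xi_t(0)=+)$ by~$\sum_y P(0\in V_y(t))=E|V_0(t)|$ via translation invariance and then using~$|V_0(t)|\le|\C|\in L^1$ with dominated convergence, is correct and in fact makes rigorous a step the paper only sketches (``the process~$\xi$ is dominated by the superposition of these non-interacting processes, so the process~$\xi$ dies out as well''). For Theorem~\ref{th:extinction} alone this is cleaner.

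Your block-construction sketch, however, is where the paper's proof contains an idea you have not yet found. You tile only in space and declare a box bad when activity \emph{originating inside it at time~$0$} escapes; this does not iterate in time, because after time~$T$ the configuration near a good box may have been contaminated by neighbors and is no longer translation invariant. The paper instead uses genuine space-time blocks~$\A^2\subset\B^2$ and shows that~$\A^2$ is empty with high probability \emph{regardless of what happens outside~$\B^2$}. The device that makes this uniform in~$\lambda$---the point you correctly flag as the obstacle---is that the number of successful births entering through the periphery of~$\B^2$ is at most one per death mark on the boundary layer, hence dominated by a~$\poisson(\Lambda_2)$ variable with~$\Lambda_2$ depending only on~$L_2$ and~$d$, not on~$\lambda$. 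Once entry points are controlled this way, each entry point is at space-time distance~$\ge L_2$ from~$\A^2$, and the single-source exponential decay finishes the estimate. If you want the exponential decay (not just extinction), you will need this periphery-via-deaths bound or something equivalent.
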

\noindent
 This result is in sharp contrast with the mean-field model in which, for all~$p > 0$, the population survives when~$\lambda > 1/p$.
 This qualitative difference is due to the effect of global versus local interactions.
 In the mean-field model, even if~$p > 0$ is very small, provided~$\lambda$ is large, fertile individuals can produce and disperse a large number of individuals, a positive fraction of which are fertile.
 In contrast, in the interacting particle system, because fertile individuals are now blocked by their own offspring, they can only produce a limited number of individuals~(even if~$\lambda = \infty$), all of which are likely to be sterile when~$p > 0$ is small.
 The snapshots of the single-type process in Figure~\ref{fig:sterile} show that, while increasing the product~$\lambda p$ and decreasing~$p$, the density of individuals in the spatial model decreases, which supports/illustrates Theorems~\ref{th:survival}--\ref{th:extinction}, and contrasts with the monotonicity of the density of individuals~\eqref{eq:Q} with respect to~$\lambda p$ in the mean-field model.
\begin{figure}[t!]
\centering
\scalebox{0.40}{\input{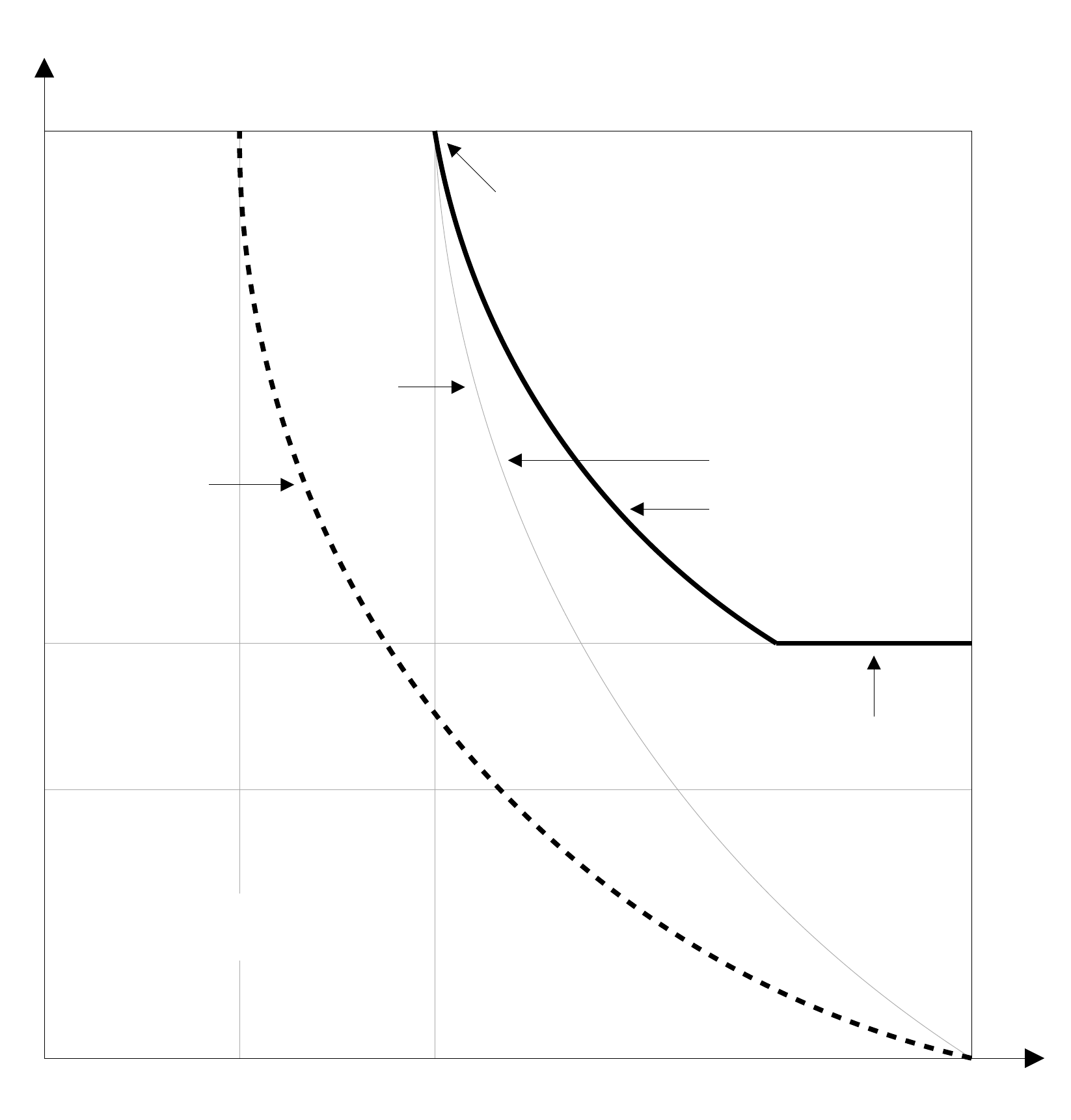_t}}
\caption{\upshape{
 Phase structure of the single-type model.
 The dashed curve with equation~$\lambda p = 1$ separates the extinction phase from the survival phase for the mean-field model.
 The solid black curve shows the phase transition of the spatial model.
 This curve lies above the gray curve~$\lambda p = \lambda_c$ according to Theorem~\ref{th:coupling} and above the gray line~$p = 1/4d$ according to Theorem~\ref{th:extinction}.
 This curve also converges to one as~$\lambda \downarrow \lambda_c$ according to Theorems~\ref{th:survival}--\ref{th:extinction}.}}
\label{fig:phase}
\end{figure}
 Figure~\ref{fig:phase} summarizes our results for the spatial and nonspatial single-type models. \\
\indent 
 For the multitype contact process~(the particular case~$p_1 = p_2 = 1$), Neuhauser~\cite{neuhauser_1992} proved, using duality techniques, that the type with the largest birth rate wins, provided the birth rate is larger than~$\lambda_c$.
 She also proved that, in the neutral case~$\lambda_1 = \lambda_2 > \lambda_c$, the process clusters in dimensions one and two, while coexistence occurs in higher dimensions.
 Returning to the process with fertile and sterile states, Theorems~\ref{th:survival}--\ref{th:extinction} suggest that, when~$\lambda_1 > \lambda_c$ and~$p_1 = 1$, type~1 individuals win whenever~$p_2 < 1/4d$.
 In this case, extinction of the~2s directly follows from Theorem~\ref{th:extinction}.
 Survival of the~1s, however, is unclear, because the~1s are now blocked by the~2s.
 To prove survival, we will study an infinite collection of single-type processes coupled with the multitype process, and use linear growth of the~1s in the absence of the~2s and exponential decay of the~2s in the absence of the~1s to deduce the existence of clusters of~1s that do not interact with the~2s.
\begin{theorem}
\label{th:mcp}
 $\lambda_1 > \lambda_c$ and $p_1 = 1$ and~$p_2 < 1/4d \,\Rightarrow$ type~1 wins (even when~$\lambda_2 = \infty$).
\end{theorem}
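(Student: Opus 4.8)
I would prove the two halves of the claim—extinction of type~2 and survival of type~1—separately. Extinction of the~2s is the easy half: type-1 individuals never help the~2s and only ever occupy sites that would otherwise be empty, so the usual monotone coupling on the graphical representation shows that, at all times, the set of sites in state~$\pm2$ in the multitype process lies below the corresponding set in the single-type process with parameters~$(\lambda_2,p_2)$ started from the same~2s (with the rest of the lattice declared empty); this still makes sense when~$\lambda_2=\infty$, exactly as in the proof of Theorem~\ref{th:extinction}. Since~$p_2<1/4d\le p_-$, that single-type process dies out by Theorem~\ref{th:extinction}, and hence so do the~2s. I would also keep the quantitative content of that proof, namely that the progeny of one initial~2 is dominated by the total progeny of a subcritical branching process: this shows that the space-time region ever occupied by~2s is contained in a union~$\bigcup_z B(z,R_z)\times[0,T_z]$ indexed by the initial~2s, with~$R_z$ and~$T_z$ dominated by independent random variables having exponential tails. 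In particular, each ``2-obstacle'' is a bounded set that has vanished by a time with an exponential tail.

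For survival of the~1s the mechanism is the following. Because~$p_1=1$ and~$\lambda_1>\lambda_c$, the~1s in the \emph{absence} of~2s form the supercritical contact process, which spreads linearly in time~\cite{bezuidenhout_grimmett_1990,durrett_neuhauser_1997}; a colony of~1s growing at a fixed speed~$v$ therefore reaches distance~$r$ only after a time of order~$r/v$, by which time—by the exponential tails above together with a Borel--Cantelli argument—every~2-obstacle the colony could meet near radius~$r$ has already disappeared, provided~$r$ exceeds an a.s.\ finite radius. Thus, past a transient, a surviving colony of~1s is insulated from the~2s and is literally a cluster of~1s in the multitype process evolving as a genuine contact process, so it survives and equilibrates to a positive density of~$+1$s. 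To turn this into a proof I would run an infinite family of single-type~$(\lambda_1,1)$ contact processes coupled to the multitype process—one restarted, in the spirit of a block construction, from the~$+1$s present in the multitype process in each space-time block—each coinciding with the~1s of the multitype process for as long as no~2 is adjacent to it, and show that at least one of them survives without ever meeting a~2.

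Establishing that last point is a block construction comparing the~1s of the multitype process with supercritical oriented site percolation. For a space-time block of fixed side~$L$ and duration~$T$, declare it good if the~1s carry a seed on one face to seeds on the neighboring faces \emph{and} no~2-obstacle meets an enlarged copy of the block: the first event has probability close to~$1$ once~$L$ and~$T$ are chosen large (supercritical contact process), while the second fails with probability at most~$\epsilon_n$ for a block at time-level~$n$, with~$\epsilon_n\to0$ by the exponential decay of the~2s; since these block events moreover have rapidly decaying dependence, a standard comparison gives domination of a supercritical oriented percolation from some time-level~$n_0$, hence survival. The remaining ingredient is a seed of~1s at level~$n_0$: starting from a translation-invariant configuration with a positive density of~$+1$s, with positive probability some~$+1$ survives the transient—here one only uses that every rate governing the~1s and every death rate is finite, and that, since~$p_2<1$, the~2-obstacles near that individual have bounded range and finite life even when~$\lambda_2=\infty$—and founds a colony in a pocket that is clean at the scale of the construction; by translation invariance a positive density of such seeds then suffices.

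The main difficulty is precisely the interaction between the two halves: to restart the~1s as a clean supercritical contact process one needs them to persist through the window during which the nearby~2-obstacles are still alive, whereas the growth estimates only control the~1s once they are already clean. This is exactly what the first part buys: each obstacle is a bounded set that is gone by a time with an exponential tail, so a colony need only survive a bounded, a.s.\ finite (and positive-probability-small) amount of time before it finds itself in a region that is~2-free forever, after which the supercritical contact process—together with the radius estimate controlling the obstacles further out—carries it to survival.
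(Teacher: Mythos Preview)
Your proposal is correct and rests on the same three ingredients as the paper: the monotone coupling giving extinction of the~2s, the exponential decay of each 2-obstacle coming from the branching-process bound in the proof of Theorem~\ref{th:extinction}, and the supercritical block construction for the~1s from Theorem~\ref{th:survival}. The organizational difference is in how the two halves are glued. You let time do the work: the probability that a 2-obstacle meets a block at level~$n$ tends to~0, so from some level~$n_0$ the comparison percolation is supercritical, and you then argue separately that a seed of~1s reaches level~$n_0$ with positive probability. The paper instead fixes a space--time cone~$\nabla_z$ growing linearly from a box~$2zL+[-KL,KL]^d$ and considers three events: the 1-process \emph{restricted to the cone} survives~($A_z$), the single-type 2-process started outside the box never touches the cone~($B_z$), and the box is initially free of~2s~($C_z$). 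Because~$A_z$ depends only on the graphical representation inside~$\nabla_z$, $B_z$ only on the representation outside, and~$C_z$ only on the initial configuration, the three events are independent; each has positive probability (the second by summing the exponential tails over all entry points and taking~$K$ large), so a positive density of such ``1-sources'' exists by ergodicity. This packaging avoids both the time-inhomogeneous percolation and the separate ``survive the transient'' step, and it sidesteps the long-range correlations that a single 2-obstacle would induce across many of your blocks; your route is sound but would need that dependence controlled explicitly.
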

\noindent
 This result is again in sharp contrast with the mean-field model.
 Indeed, in the mean-field model, the type with the highest~$\lambda_i p_i$ product wins provided the product is larger than one.
 In contrast, the previous theorem shows that, in the interacting particle system, a type with~$\lambda_i p_i$ barely larger than~$\lambda_c$ can win against a type with~$\lambda_i p_i = \infty$. \\
\indent
 The rest of the paper is devoted to the proof of our results.
 Section~\ref{sec:mf} focuses on the analysis of the mean-field model: identification of all the fixed points along with their stability.
 Section~\ref{sec:coupling} shows how to construct the spatial model graphically from a collection of Poisson processes, which is then used to couple the single-type model and the contact process, and deduce Theorem~\ref{th:coupling}.
 Section~\ref{sec:survival} combines a block construction and a perturbation argument to prove Theorem~\ref{th:survival}.
 Section~\ref{sec:extinction} studies the progeny of the subcritical Galton-Watson branching process above and uses another block construction to deduce Theorem~\ref{th:extinction}.
 Finally, Section~\ref{sec:mcp} relies on coupling techniques and various results collected in the proofs of Theorems~\ref{th:survival}--\ref{th:extinction} to deduce Theorem~\ref{th:mcp}.


\section{Mean-field model}
\label{sec:mf}
 In the presence of only one type, letting~$u_+$ denote the density of fertile individuals, and~$u_-$ the density of sterile individuals, the mean-field model reduces to
\begin{equation}
\label{eq:mf-01}
\begin{array}{rcl}
  u_+' & \n = \n & F_+ (u_+, u_-) = \lambda p u_+ (1 - u_+ - u_-) - u_+, \vspace*{4pt} \\
  u_-' & \n = \n & F_- (u_+, u_-) = \lambda q u_+ (1 - u_+ - u_-) - u_-. \end{array}
\end{equation}
 The extinction state~$u_+ = u_- = 0$ is always a fixed point.
 Assuming that~$u_+ \neq 0$ and setting the right-hand side of the first equation in~\eqref{eq:mf-01} equal to zero, we get
\begin{equation}
\label{eq:mf-02}
\lambda p (1 - u_+ - u_-) = 1 \quad \hbox{so} \quad u_+ + u_- = 1 - 1 / \lambda p.
\end{equation}
 Substituting in the second equation also gives
\begin{equation}
\label{eq:mf-03}
  u_- = \lambda q u_+ (1 - (1 - 1 / \lambda p)) = q u_+ / p = \quad \hbox{so} \quad q u_+ = p u_-.
\end{equation}
 Combining~\eqref{eq:mf-02}--\eqref{eq:mf-03}, we deduce that there exists a unique~(biologically relevant) interior fixed point in the two-dimensional simplex~$\Delta_2$ given by
\begin{equation}
\label{eq:mf-04}
 Q = (1 - 1 / \lambda p) \cdot (p, q) \in \Delta_2 \quad \hbox{if and only if} \quad \lambda p > 1.
\end{equation}
 To study the local stability, notice that the Jacobian matrix of~\eqref{eq:mf-01} can be written as
 $$ \mathcal J = \left(
    \begin{array}{cc} \lambda p (1 - 2 u_+ - u_-) - 1 & - \lambda p u_+ \vspace*{4pt} \\ \lambda q (1 - 2 u_+ - u_-) & - \lambda q u_+ - 1 \end{array} \right). $$
 The trace and determinant of the matrix are given by
 $$ \begin{array}{rcl}
      T (u_+, u_-) & \n = \n & \lambda p (1 - u_+ - u_-) - \lambda u_+ - 2, \vspace*{4pt} \\
      D (u_+, u_-) & \n = \n & \lambda u_+ - \lambda p (1 - u_+ - u_-) + 1 = - T (u_+, u_-) - 1. \end{array} $$
 At the trivial fixed point~$u_+ = u_- = 0$, this reduces to
\begin{equation}
\label{eq:mf-05}
  T (0) = \lambda p - 2 \quad \hbox{and} \quad D (0) = 1 - \lambda p,
\end{equation}
 while at the interior fixed point~\eqref{eq:mf-04}, this reduces to
\begin{equation}
\label{eq:mf-06}
\begin{array}{rcl}
  T (Q) & \n = \n & \lambda p (1 - (1 - 1 / \lambda p)) - \lambda p (1 - 1 / \lambda p) - 2 = - \lambda p, \vspace*{4pt} \\
  D (Q) & \n = \n & - T (Q) - 1 = \lambda p - 1. \end{array}
\end{equation}
 Looking at the sign of the trace and determinant in~\eqref{eq:mf-05}--\eqref{eq:mf-06} to deduce the sign of the eigenvalues shows that the trivial fixed point is locally stable if and only if~$\lambda p < 1$, while the interior fixed point is locally stable if and only if~$\lambda p > 1$.
 To also study the limiting behavior of the system and the global stability of the fixed points, let~$\phi = 1 / (u_+ u_-)$ and notice that
 $$ \begin{array}{rcl}
    \nabla (\phi F_+, \phi F_-) & \n = \n &
    \displaystyle \frac{\partial (\phi F_+)}{\partial u_+} + \frac{\partial (\phi F_-)}{\partial u_-} \vspace*{8pt} \\ & \n = \n &
    \displaystyle \frac{\partial}{\partial u_+} \bigg(\frac{\lambda p (1 - u_+ - u_-) - 1}{u_-} \bigg) +
    \displaystyle \frac{\partial}{\partial u_-} \bigg(\frac{\lambda q (1 - u_+ - u_-)}{u_-} - \frac{1}{u_+} \bigg) \vspace*{8pt} \\ & \n = \n &
    \displaystyle - \frac{\lambda p}{u_-} - \frac{\lambda q (1 - u_+)}{u_-^2} < 0 \end{array} $$
 for all~$(u_+, u_-)$ in the interior of~$\Delta_2$.
 In particular, the Bendixson-Dulac theorem excludes the existence of periodic orbits, from which we conclude that
\begin{itemize}
\item
 the population dies out when~$\lambda p < 1$ in the sense that, regardless of the initial densities of fertile/sterile individuals, we have $u_+ + u_- \to 0$ as time goes to infinity, \vspace*{4pt}
\item
 the population survives when~$\lambda p > 1$ in the sense that the interior fixed point~$Q$ belongs to the interior of the simplex, and~$(u_+, u_-) \to Q$ whenever~$u_+ > 0$ initially.
\end{itemize}
 Figure~\ref{fig:MFt1} shows the solution curves of the mean-field model in the survival phase~$\lambda p > 1$.
\begin{figure}[t!]
\centering
\scalebox{0.42}{\input{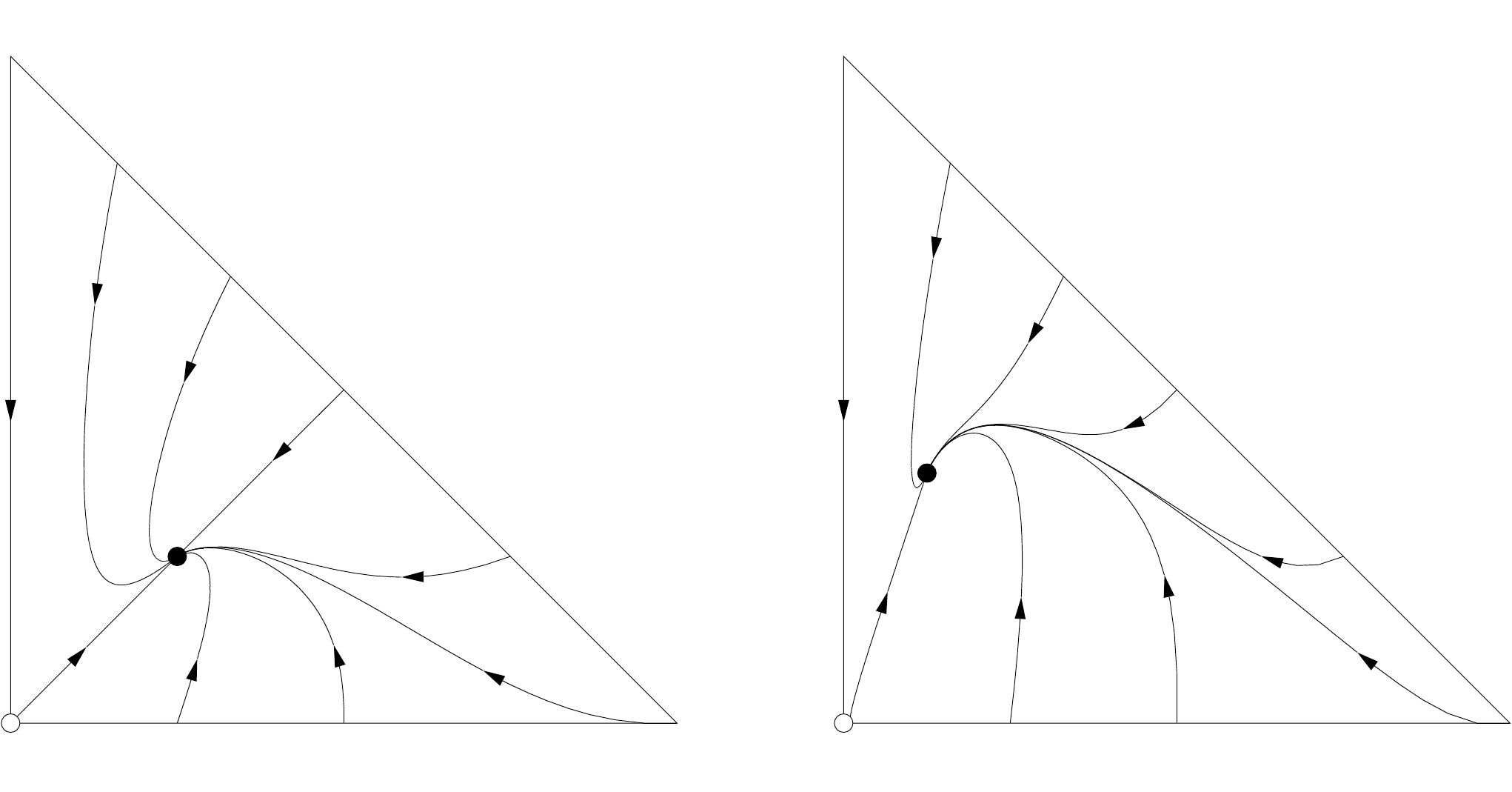_t}}
\caption{\upshape{
 Solution curves of the mean-field model~\eqref{eq:mf-01}.
 The horizontal axis represents the density of fertile individuals, while the vertical axis represents the density of sterile individuals.
 In both pictures~$\lambda p = 2 > 1$ so all the solution curves starting from~$u_+ \neq 0$ converge to the unique interior fixed point~$Q$.
 The limiting density of individuals is the same in both pictures, but the limiting density of fertile individuals is smaller and the limiting density of sterile individuals larger in the second picture because~$p$ is smaller.}}
\label{fig:MFt1}
\end{figure}
 In the presence of two types, letting~$u_{+i}$ denote the density of fertile type~$i$ individuals, and~$u_{-i}$ the density of sterile type~$i$ individuals, the mean-field model becomes
\begin{equation}
\label{eq:mf-07}
\begin{array}{rclcrcl}
  u_{+1}' & \n = \n & \lambda_1 p_1 u_{+1} u_0 - u_{+1}, \qquad & u_{-1}' & \n = \n & \lambda_1 q_1 u_{+1} u_0 - u_{-1}, \vspace*{4pt} \\
  u_{+2}' & \n = \n & \lambda_2 p_2 u_{+2} u_0 - u_{+2}, \qquad & u_{-2}' & \n = \n & \lambda_2 q_2 u_{+2} u_0 - u_{-2}, \end{array}
\end{equation}
 where~$u_0 = 1 - u_{+1} - u_{-1} - u_{+2} - u_{-2}$.
 To simply the notation, let
 $$ \Lambda_1 = 1 - 1 / \lambda_1 p_1 \quad \hbox{and} \quad \Lambda_2 = 1 - 1 / \lambda_2 p_2. $$
 The analysis in the presence of one type shows that, in addition to the trivial fixed point, there are two fixed points on the boundary of the four-dimensional simplex~$\Delta_4$ given by
 $$ \begin{array}{rclcl}
      Q_1 & \n = \n & \Lambda_1 \cdot (p_1, q_1, 0, 0) & \hbox{if and only if} & \lambda_1 p_1 > 1, \vspace*{4pt} \\
      Q_2 & \n = \n & \Lambda_2 \cdot (0, 0, p_2, q_2) & \hbox{if and only if} & \lambda_2 p_2 > 1. \end{array} $$
 To study the existence of interior fixed points, assume that~$u_{\pm 1}, u_{\pm 2} > 0$ and set the right-hand side of the equations on the left of~\eqref{eq:mf-07} equal to zero.
 This gives
 $$ \lambda_1 p_1 u_0 - 1 = \lambda_2 p_2 u_0 - 1 = 0 \quad \hbox{and} \quad \lambda_1 p_1 = \lambda_2 p_2 = 1 / u_0 > 1, $$
 so coexistence is only possible when~$\lambda_1 p_1 = \lambda_2 p_2 > 1$.
 Some basic algebra confirms that, in this case, there is a collection of interior fixed points given by the segment line
 $$ (Q_1, Q_2) = \{P_{\theta} = \theta Q_1 + (1 - \theta) Q_2 : 0 < \theta < 1 \} \subset \Delta_4 \ \ \hbox{connecting~$Q_1$ and~$Q_2$}, $$
 so coexistence is possible.
 Indeed, at point~$P_{\theta}$, when~$\lambda_1 p_1 = \lambda_2 p_2$,
 $$ u_0 = 1 - \theta \Lambda_1 (p_1 + q_1) - (1 - \theta) \Lambda_2 (p_2 + q_2) = 1 - \Lambda_1 = 1 / \lambda_1 p_1, $$
 from which it follows that
 $$ \begin{array}{rcl}
      u_{+1}' & \n = \n & \lambda_1 p_1 u_{+1} u_0 - u_{+1} = (\lambda_1 p_1 u_0 - 1) u_{+1} = (1 - 1) u_{+1} = 0, \vspace*{4pt} \\
      u_{-1}' & \n = \n & \lambda_1 q_1 u_{+1} u_0 - u_{-1} = \lambda_1 q_1 \theta \Lambda_1 p_1 / \lambda_1 p_1 - \theta \Lambda_1 q_1 = \theta \Lambda_1 q_1 - \theta \Lambda_1 q_1 = 0.\end{array} $$
 By symmetry, the same holds for~$u_{\pm 2}'$, and some basic algebra shows that there are no other interior fixed points.
 Now, notice that the Jacobian matrix of~\eqref{eq:mf-07} is given by
\begin{equation}
\label{eq:jacobian}
\mathcal J = \left(\begin{array}{cccc}
   \lambda_1 p_1 (u_0 - u_{+1}) - 1 & - \lambda_1 p_1 u_{+1}     & - \lambda_1 p_1 u_{+1}           & - \lambda_1 p_1 u_{+1} \vspace*{4pt} \\
   \lambda_1 q_1 (u_0 - u_{+1})     & - \lambda_1 q_1 u_{+1} - 1 & - \lambda_1 q_1 u_{+1}           & - \lambda_1 q_1 u_{+1} \vspace*{4pt} \\
 - \lambda_2 p_2 u_{+2}             & - \lambda_2 p_2 u_{+2}     & \lambda_2 p_2 (u_0 - u_{+2}) - 1 & - \lambda_2 p_2 u_{+2} \vspace*{4pt} \\
 - \lambda_2 q_2 u_{+2}             & - \lambda_2 q_2 u_{+2}     & \lambda_2 q_2 (u_0 - u_{+2})     & - \lambda_2 q_2 u_{+2} - 1 \end{array} \right)
\end{equation}
 At point~$Q_1$, using that~$u_0 = 1 / \lambda_1 p_1$, the matrix reduces to
 $$ \begin{array}{rcl}
    \mathcal J (Q_1) = \left(\begin{array}{cc} A & B \vspace*{4pt} \\ 0 & D \end{array} \right) & \hbox{where} & \hspace*{2pt}
                   A = \left(\begin{array}{cc}  p_1 (1 - \lambda_1 p_1)             & p_1 (1 - \lambda_1 p_1) \vspace*{4pt} \\
                                                q_1 / p_1 + q_1 (1 - \lambda_1 p_1) & q_1 (1 - \lambda_1 p_1) - 1 \end{array} \right) \vspace*{12pt} \\ &&
                   D = \left(\begin{array}{cc} \lambda_2 p_2 / \lambda_1 p_1 - 1    & 0 \vspace*{4pt} \\
                                               \lambda_2 q_2 / \lambda_1 p_1        & - 1 \end{array} \right). \end{array} $$
 Letting~$\alpha = 1 - \lambda_1 p_1$ to simplify the notation, we get
 $$ \begin{array}{rcl}
    \det (A - X I_2) & \n = \n & (\alpha p_1 - X)(\alpha q_1 - 1 - X) - \alpha q_1 (1 + \alpha p_1) \vspace*{4pt} \\
                     & \n = \n & X^2 + (1 - \alpha p_1 - \alpha q_1) X + \alpha p_1 (\alpha q_1 - 1) - \alpha q_1 (1 + \alpha p_1) \vspace*{4pt} \\
                     & \n = \n & X^2 + (1 - \alpha) X - \alpha = (X + 1)(X - \alpha).\end{array} $$
 It follows that the spectrum of~$\mathcal J (Q_1)$ is given by
 $$ \sigma (\mathcal J (Q_1)) = \sigma (A) \cup \sigma (D) = \{-1, \alpha = 1 - \lambda_1 p_1, \lambda_2 p_2 / \lambda_1 p_1 - 1 \} $$
 therefore the fixed point~$Q_1$, which is on the boundary of the simplex~$\Delta_4$ if and only if~$\lambda_1 p_1 > 1$, is locally stable if and only if we also have~$\lambda_1 p_1 > \lambda_2 p_2$.
 By symmetry, we have a similar result for the fixed point~$Q_2$.
 To study the local stability of the fixed points~$P_{\theta}$ when~$\lambda_1 p_1 = \lambda_2 p_2 > 1$, notice that, in this case, the matrix~\eqref{eq:jacobian} minus~$X I_4$ reduces to
 $$ \left(\begin{array}{cccc}
    \theta \alpha p_1 - X          & \theta \alpha p_1         & \theta \alpha p_1                   & \theta \alpha p_1 \vspace*{4pt} \\
     q_1 / p_1 + \theta \alpha q_1 & \theta \alpha q_1 - 1 - X & \theta \alpha q_1                   & \theta \alpha q_1 \vspace*{4pt} \\
    (1 - \theta) \alpha p_2        & (1 - \theta) \alpha p_2   & (1 - \theta) \alpha p_2 - X         & (1 - \theta) \alpha p_2 \vspace*{4pt} \\
    (1 - \theta) \alpha q_2        & (1 - \theta) \alpha q_2   & q_2 / p_2 + (1 - \theta) \alpha q_2 & (1 - \theta) \alpha q_2 - 1 - X \end{array} \right) $$
 where~$\alpha = 1 - \lambda_1 p_1 = 1 - \lambda_2 p_2$.
 To compute the determinant, we first subtract the second column from the other three columns, then expand along the last column to get
 $$ \begin{array}{rcl}
    \det (\mathcal J (P_{\theta}) - X I_4) & \n = \n &
    (X + 1) \,\left|\begin{array}{cccc}
       - X & \theta \alpha p_1       & 0 \vspace*{4pt} \\
         0 & (1 - \theta) \alpha p_2 & - X \vspace*{4pt} \\
         0 & (1 - \theta) \alpha q_2 & q_2 / p_2 \end{array} \right| \vspace*{8pt} \\ & \n - \n &
    (X + 1) \,\left|\begin{array}{cccc}
       - X                 & \theta \alpha p_1         & 0 \vspace*{4pt} \\
         q_1 / p_1 + 1 + X & \theta \alpha q_1 - 1 - X & 1 + X \vspace*{4pt} \\
         0                 & (1 - \theta) \alpha p_2   & - X \end{array} \right| \end{array} $$
 Then, expanding along the first column of the first matrix and expanding along the last column of the second matrix, the determinant above becomes
 $$ \begin{array}{rcl}
    \det (\mathcal J (P_{\theta}) - X I_4) & \n = \n &
      - \,X (X + 1) \,\left|\begin{array}{cccc}
         (1 - \theta) \alpha p_2 & - X \vspace*{4pt} \\
         (1 - \theta) \alpha q_2 & q_2 / p_2 \end{array} \right| +
      (X + 1)^2 \,\left|\begin{array}{cccc}
         - X & \theta \alpha p_1 \vspace*{4pt} \\
           0 & (1 - \theta) \alpha p_2 \end{array} \right| \vspace*{8pt} \\ &&
      + \,X (X + 1) \,\left|\begin{array}{cccc}
        - X                 & \theta \alpha p_1 \vspace*{4pt} \\
          q_1 / p_1 + 1 + X & \theta \alpha q_1 - 1 - X \end{array} \right| \end{array} $$
 and a direct calculation gives
 $$ \begin{array}{rcl}
    \det (\mathcal J (P_{\theta}) - X I_4) & \n = \n &
     - X (X + 1)^2 (1 - \theta) \alpha q_2 - X (X + 1)^2 (1 - \theta) \alpha p_2 \vspace*{4pt} \\ &&
     - X (X + 1)(X (\theta \alpha q_1 - 1 - X) + \theta \alpha q_1 + (X + 1) \theta \alpha p_1) \vspace*{4pt} \\ & \n = \n &
     - X (X + 1)^2 (1 - \theta) \alpha + X (X + 1)^2 (X - \theta \alpha) \vspace*{4pt} \\ & \n = \n &
       X (X + 1)^2 (X - \alpha) = X (X + 1)^2 (X - (1 - \lambda_1 p_1)). \end{array} $$
 This shows that~$\sigma (\mathcal J (P_{\theta})) = \{0, -1, \alpha = 1 - \lambda_1 p_1 < 0 \}$.
 In addition,
 $$ \mathcal J (P_{\theta}) \cdot (Q_1 - Q_2) = \mathcal J (P_{\theta}) \cdot \Lambda_1 (p_1, q_1, - p_2, - q_2) = 0, $$
 therefore the eigenspace associated with~0 is generated by the vector connecting~$Q_1$ and~$Q_2$.
\begin{figure}[t!]
\centering
\scalebox{0.42}{\input{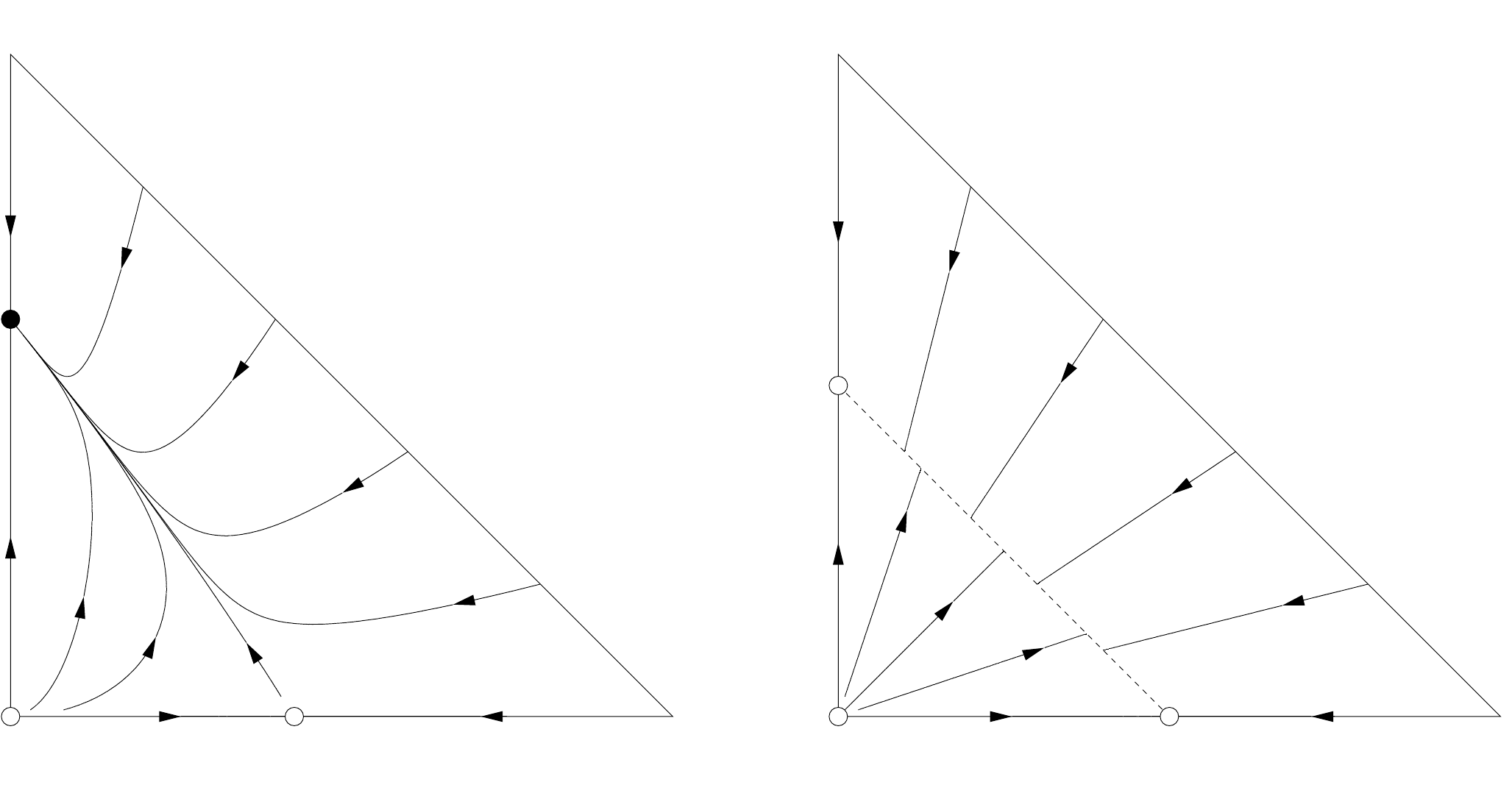_t}}
\caption{\upshape{
 Projection of the solution curves of the mean-field model~\eqref{eq:mf-07}.
 The horizontal axis represents the density of type~1 individuals, while the vertical axis represents the density of type~2 individuals.
 In both pictures, the solution curves start~(and remain) in the invariant subspace~$\Gamma_2$.
 In the left picture,~$\lambda_2 p_2 > \lambda_1 p_1 > 1$ so~$Q_2$ is locally stable, while~$Q_1$ is unstable: type~2 wins.
 In the right picture,~$\lambda_1 p_1 = \lambda_2 p_2 = 2 > 1$ so the set of interior fixed points consists of line segment connecting~$Q_1$ and~$Q_2$: both types coexist.}}
\label{fig:MFt2}
\end{figure}
 Because the other two eigenvalues are negative, we deduce that the segment line of fixed points is locally stable.
 In conclusion, in the presence of two types,
\begin{itemize}
\item
 the population dies out when~$\lambda_1 p_1 < 1$ and~$\lambda_2 p_2 < 1$ in the sense that the trivial fixed point is the only fixed point in the four-dimensional simplex, \vspace*{4pt}
\item
 type~1 individuals win when~$\lambda_1 p_1 > 1$ and~$\lambda_1 p_1 > \lambda_2 p_2$ in the sense that~$Q_1 \in \Delta_4$ and is locally stable, while~$Q_2 \notin \Delta_4$ or is locally unstable, \vspace*{4pt}
\item
 type~2 individuals win when~$\lambda_2 p_2 > 1$ and~$\lambda_2 p_2 > \lambda_1 p_1$ in the sense that~$Q_2 \in \Delta_4$ and is locally stable, while~$Q_1 \notin \Delta_4$ or is locally unstable, \vspace*{4pt}
\item
 coexistence occurs when~$\lambda_1 p_1 = \lambda_2 p_2 > 1$ in the sense that the segment line~$(Q_1, Q_2)$ is a locally stable segment line of interior fixed points.
\end{itemize}
 Note also that, if~$q_1 u_{+1} = p_1 u_{-1}$ initially, it follows from~\eqref{eq:mf-07} that
 $$ \begin{array}{rcl}
      u_{+1}' u_{-1} - u_{-1}' u_{+1} & \n = \n & (\lambda_1 p_1 u_{+1} u_0 - u_{+1}) u_{-1} - (\lambda_1 q_1 u_{+1} u_0 - u_{-1}) u_{+1} \vspace*{4pt} \\
                                      & \n = \n &  \lambda_1 u_{+1} u_0 (p_1 u_{-1} - q_1 u_{+ 1}) = 0 \end{array} $$
 therefore~$q_1 u_{+1} = p_1 u_{-1}$ at all times.
 The same holds by symmetry for type~2.
 This shows in particular that the two-dimensional vector space~$\Gamma_2$ generated by the vectors~$Q_1$ and~$Q_2$ is an invariant subspace of the system~\eqref{eq:mf-07}.
 Figure~\ref{fig:MFt2} shows~(a projection of) the solution curves of the mean-field model in the case where type~2 wins on the left, and in the coexistence case on the right.
 In both pictures, the solution curves start~(and remain) in the invariant subspace~$\Gamma_2$.


\section{Extinction when~$\lambda p \leq \lambda_c$}
\label{sec:coupling}
 This section is devoted to the proof of Theorem~\ref{th:coupling}.
 The first step to study the process and prove our results is to construct the interacting particle system graphically from a collection of independent Poisson processes/exponential clocks, an approach due to Harris~\cite{harris_1978}.
 More precisely, the multitype contact process with sterile states starting from any initial configuration can be constructed using the collection of exponential clocks and updating rules below.
\begin{figure}[t!]
\centering
\scalebox{0.35}{\input{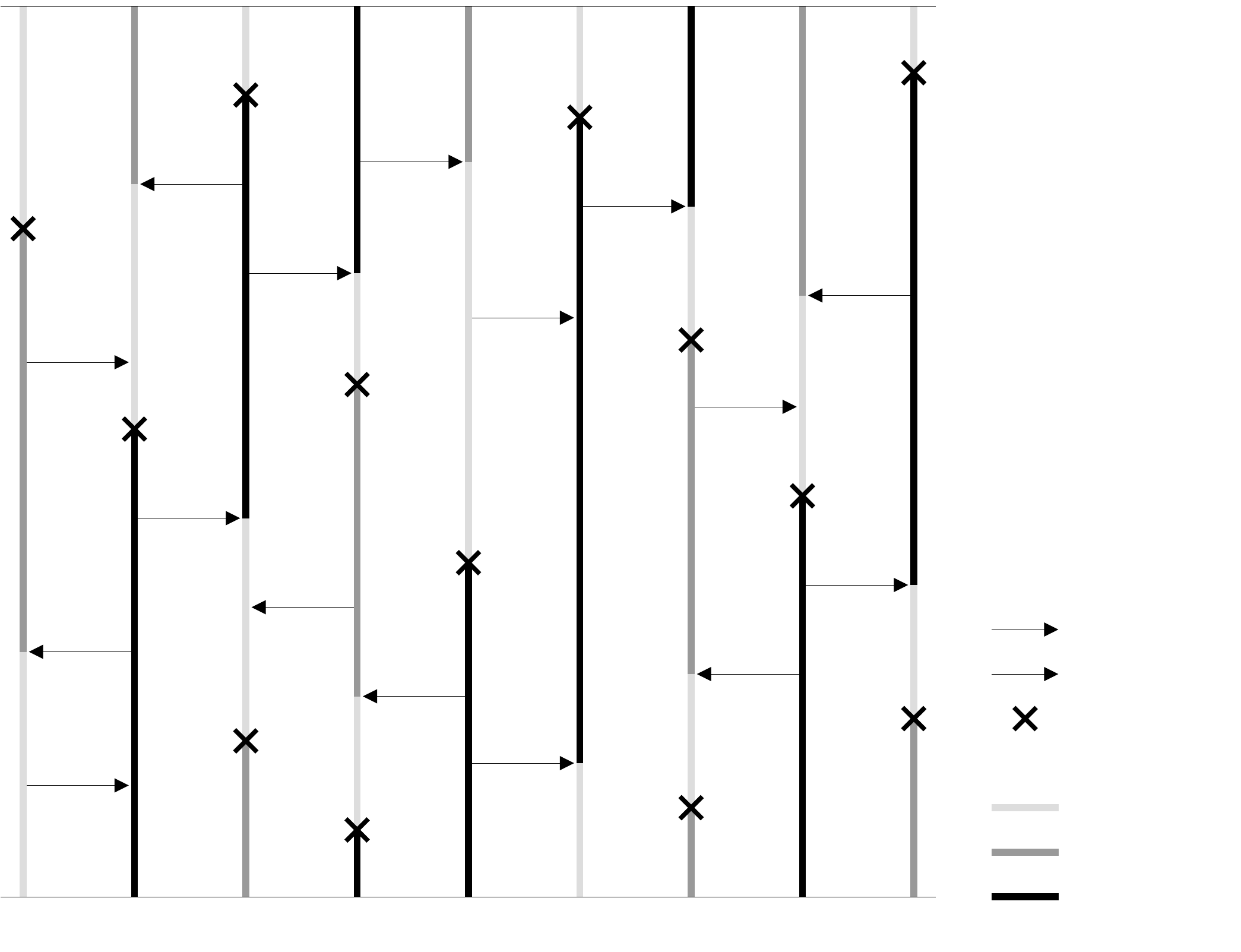_t}}
\caption{\upshape{
 Graphical representation of the single-type process.
 The light gray lines represent the empty sites~(0), the dark gray lines the sterile individuals~($- 1$), and the black lines the fertile individuals~($+ 1$).}}
\label{fig:GR}
\end{figure}
 In the presence of two types, labeled~$i = 1, 2$, for each site~$x \in \Z^d$, and each neighbor~$y$ of site~$x$,
\begin{itemize}
\item
 Place a rate~$\lambda_i p_i / 2d$ exponential clock along~$\vec{xy}$, and draw
 $$ (x, t) \overset{+i}{\xrightarrow{\hspace*{25pt}}} (y, t) \quad \hbox{at the times~$t$ the clock rings} $$
 to indicate that if there is a $+i$ at the tail~$x$ of the arrow and a~0 at the head~$y$ of the arrow just before time~$t$, then site~$y$ becomes type~$+i$ at time~$t$. \vspace*{5pt}
\item
 Place a rate~$\lambda_i q_i / 2d$ exponential clock along~$\vec{xy}$, and draw
 $$ (x, t) \overset{-i}{\xrightarrow{\hspace*{25pt}}} (y, t) \quad \hbox{at the times~$t$ the clock rings} $$
 to indicate that if there is a $+i$ at the tail~$x$ of the arrow and a~0 at the head~$y$ of the arrow just before time~$t$, then site~$y$ becomes type~$-i$ at time~$t$. \vspace*{5pt}
\item
 Place a rate one exponential clock at~$x$, and put
 $$ \times \ \hbox{at} \ (x, t) \quad \hbox{at the times~$t$ the clock rings} $$
 to indicate that if there is an individual of either type at site~$x$ just before time~$t$, then the site becomes empty/type~0 at time~$t$.
\end{itemize}
 Note that the single-type process~$\xi$ with parameters~$\lambda = \lambda_1$ and~$p = p_1$, in which the fertile individuals are labeled~$+ = +1$ and the sterile individuals~$- = -1$, can be constructed by ignoring the type~$\pm 2$ arrows, as shown in Figure~\ref{fig:GR}.
 Let also~$\eta$ be the contact process with parameter~$\lambda p$ with states~0~=~empty and~$+$~=~occupied.
 Theorem~\ref{th:coupling} follows from the next lemma.
\begin{lemma}
\label{lem:coupling}
 There is a coupling~$(\xi, \eta)$ such that~$\,\xi_0 \leq \eta_0 \,\Rightarrow \,\xi_t \leq \eta_t \ \forall \,t > 0$.
\end{lemma}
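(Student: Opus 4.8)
The plan is to prove the lemma by the standard ``same graphical representation'' monotone coupling, the only point requiring care being the choice of partial order. First I would construct both processes on one probability space from the clocks pictured in Figure~\ref{fig:GR}: for each ordered edge $\vec{xy}$ keep the rate $\lambda p / 2d$ ``$+$'' clock and the rate $\lambda q / 2d$ ``$-$'' clock, and for each site the rate one death clock. The process $\xi$ is built from all of these exactly as described in the excerpt. The process $\eta$ is built from the \emph{same} $+$-arrows, now read as ordinary birth arrows, together with the same death marks, the $-$-arrows being discarded; since each site has $2d$ outgoing $+$-arrows of rate $\lambda p / 2d$, this $\eta$ is a version of the contact process with parameter $\lambda p$, while by Harris' construction $\xi$ is a version of the single-type process with parameters $\lambda, p$. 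As usual for a finite-range graphical construction, updates in any bounded space-time region can be processed in chronological order, so it suffices to check that each single update preserves the order.

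The order I would use is the following: write $A_t = \{x : \xi_t(x) = +\}$ for the set of fertile sites of $\xi$ and $B_t = \{x : \eta_t(x) = +\}$ for the set of occupied sites of $\eta$, and declare $\xi_t \leq \eta_t$ to mean $A_t \subseteq B_t$. Crucially, sterile individuals of $\xi$ are \emph{not} counted as occupied here: a sterile individual at $x$ may coexist with $\eta_t(x) = 0$, so the coarser comparison ``$\xi_t(x) \neq 0 \Rightarrow \eta_t(x) \neq 0$'' would be destroyed by a $-$-arrow, whereas the fertile-site comparison is not. With this definition the claim becomes: $A_0 \subseteq B_0$ implies $A_t \subseteq B_t$ for all $t > 0$.

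Then I would check the three update types, assuming $A_{s^-} \subseteq B_{s^-}$ just before an update at time $s$. (i) A death mark at $x$: in $\xi$ the site $x$ becomes $0$, so $A_s = A_{s^-} \setminus \{x\}$, and in $\eta$ likewise $B_s = B_{s^-} \setminus \{x\}$, so inclusion is preserved. (ii) A $+$-arrow $\vec{yx}$: the only site that can enter $A$ is $x$, and it does so only if $\xi_{s^-}(y) = +$ and $\xi_{s^-}(x) = 0$, in particular only if $y \in A_{s^-}$; but then $y \in B_{s^-}$, so $\eta_{s^-}(y) = +$, and after the update $x \in B_s$ whether $\eta_{s^-}(x)$ was $0$ (a birth occurs) or $+$ (already occupied); since $B$ can only gain sites here, inclusion is preserved. (iii) A $-$-arrow $\vec{yx}$: in $\xi$ this can only turn $x$ from $0$ into the sterile state, which leaves $A$ unchanged, and $\eta$ ignores the arrow, so $B$ is unchanged; inclusion is preserved. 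Between updates neither configuration changes, hence $A_t \subseteq B_t$ for all $t$, which is exactly $\xi_t \leq \eta_t$.

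I do not expect a serious obstacle here: the argument is routine once the bookkeeping is set up. The one place a naive attempt fails, and the only step that needs thought, is the definition of $\leq$ — one must compare the fertile sites of $\xi$ with the occupied sites of $\eta$ and treat the sterile state as ``empty'' for this comparison, since sterile individuals are precisely the mechanism that pushes $\xi$ \emph{below} the unconstrained contact process. With the lemma in hand, Theorem~\ref{th:coupling} follows by taking $\eta_0$ with $\eta_0(x) = +$ if and only if $\xi_0(x) = +$, using extinction of the contact process with parameter $\lambda p \leq \lambda_c$ from~\cite{bezuidenhout_grimmett_1990}, and noting that the sterile individuals of $\xi$, being born only next to fertile ones, must also die out once the fertile population vanishes.
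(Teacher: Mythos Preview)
Your proposal is correct and follows essentially the same approach as the paper: both build $\eta$ on the same graphical representation (using the $+$-arrows and death marks, ignoring the $-$-arrows) and verify that the set of pairs $\{-0,-+,00,0+,++\}$, i.e.\ the relation ``fertile in $\xi$ $\Rightarrow$ occupied in $\eta$,'' is preserved by every update. The paper records this check in a transition table while you spell out the three update types case by case, but the content is identical.
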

\begin{proof}
 The process~$\eta$ can be constructed from the same graphical representation as~$\xi$ using the arrows of type~$+$ and the death marks~$\times$, but ignoring the arrows of type~$-$.
 In particular, to prove the lemma, it suffices to show that, using this joint graphical representation,
 $$ (\xi_0, \eta_0) \in S = \{-0, -+, 00, 0+, ++ \}^{\Z^d} \ \Longrightarrow \ (\xi_t, \eta_t) \in S \ \ \forall \,t > 0, $$
 meaning that~$S$ is closed under the dynamics of the coupling.
 Table~\ref{tab} shows all the states that are created by the dynamics~(both types of arrows and the death marks) from the five states above and confirms that~$S$ is closed under the dynamics.
\end{proof} \vspace*{8pt} \\
 Because the contact process~$\eta$ dies out when~$\lambda p \leq \lambda_c$, it follows from the coupling in Lemma~\ref{lem:coupling} that there is also extinction of the fertile individuals in the process~$\xi$.
 Since the sterile individuals can't give birth, this implies that the process~$\xi$ dies out, which proves Theorem~\ref{th:coupling}.
\begin{table}[t!]
\centering
\scalebox{0.38}{\input{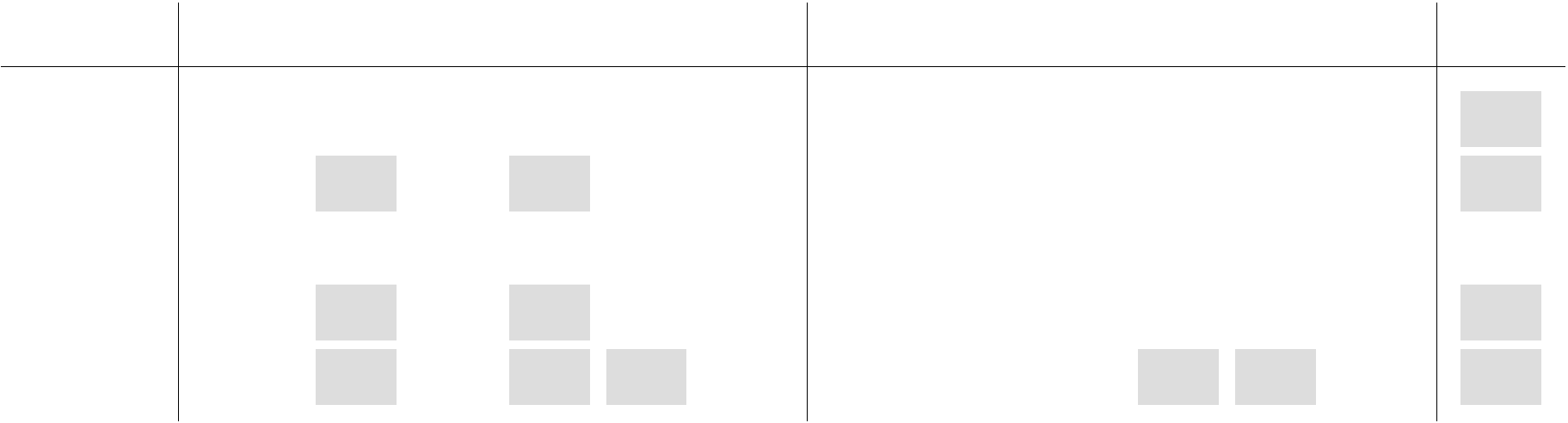_t}}
\caption{\upshape{
 Dynamics of the coupling~$(\xi, \eta)$.
 The states in the first column, respectively, first row, represent the possible states at site~$x$, respectively, at site~$y$ before a potential update, while the states in the table are the states resulting from the interactions.
 The gray boxes underline the interactions resulting in a change of configuration.}}
\label{tab}
\end{table}


\section{Survival when~$\lambda > \lambda_c$ and~$p$ close to~1}
\label{sec:survival}
 This section is devoted to Theorem~\ref{th:survival}, which states that, for all~$\lambda > \lambda_c$, the single-type process survives when~$p$ is close to one.
 The starting point is to observe that, when~$p = 1$, only~$+1$s are produced, therefore our process reduces to the basic contact process~\cite{harris_1974}.
 In the supercritical phase~$\lambda > \lambda_c$, block constructions show that the process properly rescaled in space and time dominates oriented site percolation with a density of open sites arbitrarily close to one~\cite{bezuidenhout_grimmett_1990,durrett_neuhauser_1997}.
 Once the space and time scales are fixed, the contact process with a sterile state matches the basic contact process in any given block with probability arbitrarily close to one when~$p$ is close to one, so the theorem follows from a standard perturbation argument. \\
\indent
 The domination of supercritical percolation by the contact process properly rescaled in space and time was first established by Bezuidenhout and Grimmett~\cite{bezuidenhout_grimmett_1990}, who used their block construction to prove a number of important open problems about the contact process:
 extinction at the critical value, complete convergence theorem, and shape theorem.
 We will use instead the block construction from Durrett and Neuhauser~\cite{durrett_neuhauser_1997}, even though it was designed to study the multitype contact process, because it gives more explicit space and time scales.
 A direct application of their construction gives the following for our process with~$\lambda > \lambda_c$ and~$p = 1$.
 To begin with, let
 $$ \Lat_1 = \{(m, n) \in \Z^d \times \Z_+ : m_1 + \cdots + m_d + n \ \hbox{is even} \}, $$
 which we turn into a directed graph~$\vec \Lat_1$ by putting arrows
 $$ \begin{array}{rcl} (m, n) \to (m', n') & \Longleftrightarrow & |m_1 - m_1'| + \cdots + |m_d - m_d'| = 1 \ \ \hbox{and} \ \ n' = n + 1. \end{array} $$
 The percolation process with parameter~$1 - \ep_1$ is obtained by assuming that each site~$(m, n)$ is open with probability~$1 - \ep_1$ and closed with probability~$\ep_1$, and we say that a site is wet if it can be reached from a directed path of open sites starting at level~$n = 0$.
 Returning to the interacting particle system, let~$L_1$ be a large integer, and consider the space-time blocks
 $$ \begin{array}{rcl}
    \A_{m, n}^1 & \n = \n & (mL_1, nL_1^2) + ([- L, L]^d \times \{0 \}), \vspace*{4pt} \\
    \B_{m, n}^1 & \n = \n & (mL_1, nL_1^2) + ([- 3L_1, 3L_1]^d \times [0, L_1^2]), \end{array} $$
 for all~$(m, n) \in \Lat_1$.
 See Figure~\ref{fig:invade} for a picture.
\begin{figure}[t!]
\centering
\scalebox{0.40}{\input{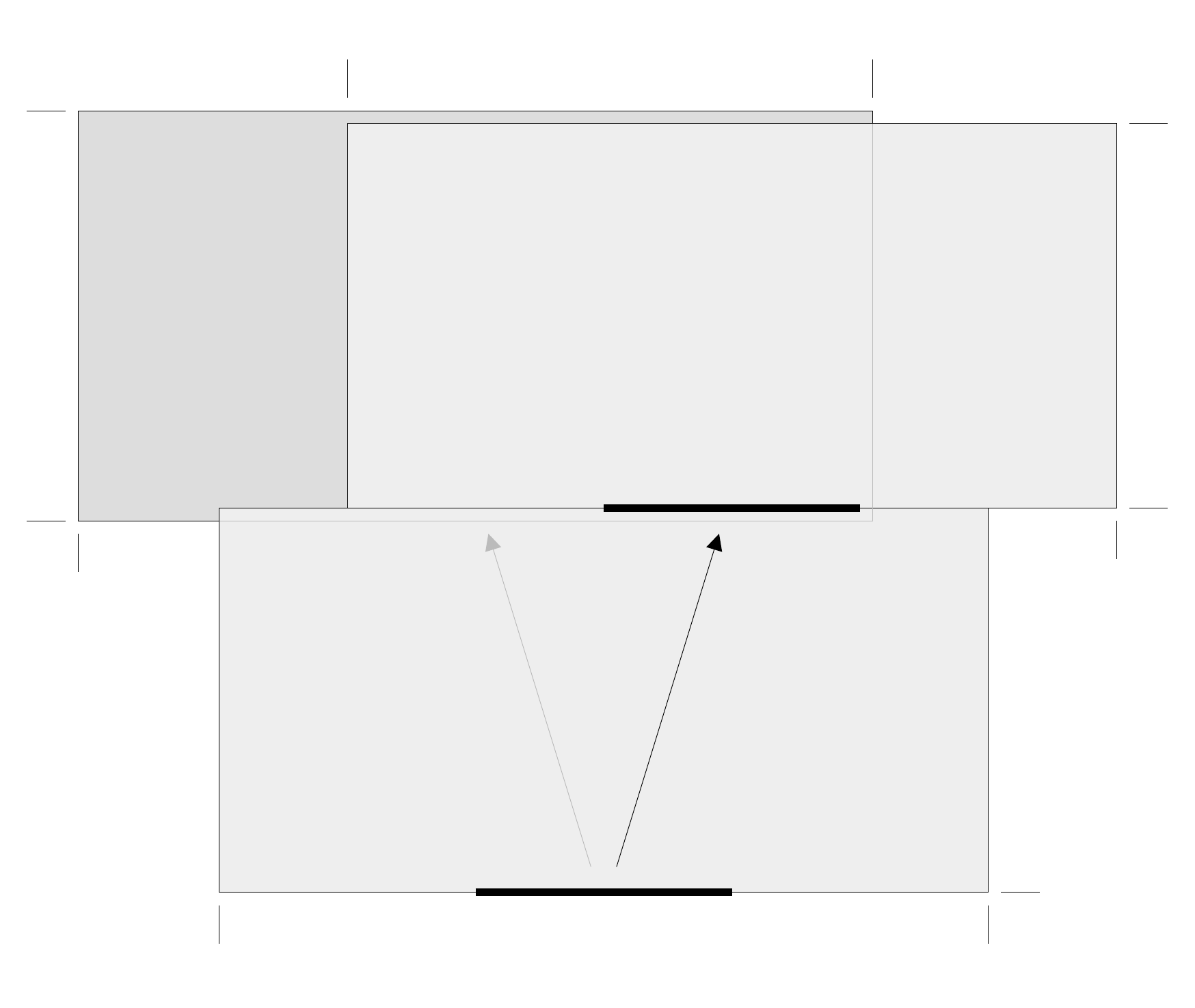_t}}
\caption{\upshape{Picture of the block construction in Durrett and Neuhauser~\cite{durrett_neuhauser_1997}.}}
\label{fig:invade}
\end{figure}
 Note that~$\A_{m, n}^1$ is flat, at the bottom of~$\B_{m, n}^1$.
 Now, partition~$\A_{m, n}^1$ into small cubes of size~$L^{0.1} \times \cdots \times L^{0.1}$, and define the events
 $$ \begin{array}{l}
      E_{m, n}^1 = \{\hbox{each small cube in~$\A_{m, n}^1$ contains at least one~$+1$} \vspace*{4pt} \\ \hspace*{75pt}
                     \hbox{and the bottom of block~$\B_{m, n}^1$ does not contain any~$-1$s} \} \end{array} $$
 for all~$(m, n) \in \Lat_1$.
 Using a repositioning algorithm to show the existence, with probability arbitrarily close to one when~$L_1$ is large, of many dual paths
 $$ \A_{m', n'}^1 \downarrow \A_{m, n}^1 \quad \hbox{for all} \quad (m', n') \leftarrow (m, n), $$
 Durrett and Neuhauser~\cite[Section~3]{durrett_neuhauser_1997} proved that there are good events~$G_{m, n}^1$ that only depend on the graphical representation in the block~$\B_{m, n}^1$ such that, for all~$\ep_1 > 0$ and for all~$\lambda > \lambda_c$, the scale parameter~$L_1 < \infty$ can be chosen sufficiently large that
\begin{equation}
\label{eq:CP-1}
\begin{array}{rl}
\hbox{(a)} & P (G_{m, n}^1) \geq 1 - \ep_1 / 2, \vspace*{4pt} \\
\hbox{(b)} & E_{m, n}^1 \cap G_{m, n}^1 \ \Longrightarrow \ E_{m', n'}^1 \ \ \hbox{for all} \ \ (m', n') \leftarrow (m, n). \end{array}
\end{equation}
 In words, if the flat region~$\A_{m, n}^1$ contains many fertile individuals and the bottom of~$\B_{m, n}^1$ does not contain any sterile individuals, then, with high probability, the same holds for the blocks immediately above.
 In particular, calling~$(m, n)$ a 1-good site if~$E_{m, n}^1$ occurs, and noticing that
 $$ \B_{m, n}^1 \cap \B_{m', n'}^1 = \varnothing \quad \hbox{whenever} \quad |m - m'| > 6 \quad \hbox{or} \quad |n - n'| > 2, $$
 it follows from the comparison result~\cite[Theorem~A.4]{durrett_1995} that
\begin{proposition}
\label{prop:contact}
 For all~$\ep_1 > 0$ and~$\lambda > \lambda_c$, there exists~$L_1 < \infty$ such that the set of 1-good sites dominates the set of wet sites in a 6-dependent oriented site percolation process on the directed graph~$\vec \Lat_1$ with parameter~$1 - \ep_1 / 2$.
\end{proposition}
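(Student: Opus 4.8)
The plan is to deduce Proposition~\ref{prop:contact} from the standard comparison between families of finite-dependent events and oriented percolation, namely \cite[Theorem~A.4]{durrett_1995}, applied to the good events~$\{G_{m, n}^1 : (m, n) \in \Lat_1\}$ produced by the Durrett--Neuhauser construction in~\eqref{eq:CP-1}. All of the probabilistic work for the case~$p = 1$ --- the repositioning algorithm, the existence of many dual paths, and the resulting properties~(a)--(b) --- is already contained in~\cite[Section~3]{durrett_neuhauser_1997}; what is left is to verify that this collection meets the hypotheses of the comparison theorem and then to quote it, so I do not expect a genuine obstacle here.

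First I would record the dependence structure. By construction, the event~$G_{m, n}^1$ depends only on the restriction of the graphical representation --- the independent Poisson clocks attached to the oriented edges and to the sites --- to the space-time block~$\B_{m, n}^1$. Since
$$ \B_{m, n}^1 \cap \B_{m', n'}^1 = \varnothing \quad \hbox{whenever} \quad |m - m'| > 6 \ \hbox{ or } \ |n - n'| > 2, $$
and in particular whenever~$\|(m, n) - (m', n')\|_\infty > 6$, the events~$G_{m, n}^1$ and~$G_{m', n'}^1$ are independent as soon as the two sites lie at sup-distance more than~$6$ on~$\Lat_1$. Thus~$\{G_{m, n}^1\}$ is a $6$-dependent family whose marginals satisfy~$P(G_{m, n}^1) \geq 1 - \ep_1/2$ by part~(a) of~\eqref{eq:CP-1}.

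Next I would feed this into \cite[Theorem~A.4]{durrett_1995}. Its hypotheses are exactly: a finite-dependent family of events with a uniform lower bound on the marginals, together with the propagation rule
$$ E_{m, n}^1 \cap G_{m, n}^1 \ \Longrightarrow \ E_{m', n'}^1 \quad \hbox{for all} \quad (m', n') \leftarrow (m, n), $$
which is part~(b) of~\eqref{eq:CP-1}. The theorem then yields a coupling of our process with a $6$-dependent oriented site percolation on~$\vec \Lat_1$ in which every site is open with probability~$1 - \ep_1/2$ and in which the set of sites reachable by an open directed path from the level-$0$ good sites~$\{(m, 0) : E_{m, 0}^1 \ \hbox{occurs}\}$ --- that is, the set of wet sites --- is contained in the set of $1$-good sites. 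This is precisely the asserted domination. No hypothesis on the initial configuration enters here, since the statement merely compares two random subsets of~$\Lat_1$ on a common probability space; the positive density of~$+1$s will only be invoked later to guarantee that the level-$0$ good sites are themselves abundant.

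The argument is essentially bookkeeping. The one point that deserves a line of care is that the block-overlap condition is asymmetric, with range~$6$ in the spatial directions but only~$2$ in the temporal direction; it is cleanest to bound everything by the sup-distance and simply call the family $6$-dependent, as above, which is all that \cite[Theorem~A.4]{durrett_1995} needs. It is also worth emphasising that the \emph{only} nontrivial input is~\eqref{eq:CP-1}: once the scale~$L_1$ and the events~$G_{m, n}^1$ have been chosen so that~(a)--(b) hold, the passage to~$6$-dependent oriented percolation is automatic.
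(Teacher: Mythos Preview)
Your proposal is correct and follows exactly the route the paper takes: the paper states the proposition as an immediate consequence of~\eqref{eq:CP-1}, the block-overlap observation~$\B_{m, n}^1 \cap \B_{m', n'}^1 = \varnothing$ whenever~$|m - m'| > 6$ or~$|n - n'| > 2$, and the comparison result~\cite[Theorem~A.4]{durrett_1995}, with no further argument. Your write-up is a faithful (and slightly more detailed) expansion of that one-line deduction.
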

\noindent
 To prove Theorem~\ref{th:survival}, the next step is to apply a perturbation argument to extend the previous proposition to the process with parameter~$p$ close to one.
\begin{proposition}
\label{prop:survive}
 For all~$\ep_1 > 0$ and~$\lambda > \lambda_c$, there exist~$L_1 < \infty$ and~$p_+ = p_+ (L_1) < 1$ such that, for all~$p > p_+$, the set of 1-good sites dominates the set of wet sites in a 6-dependent oriented site percolation process on the directed graph~$\vec \Lat_1$ with parameter~$1 - \ep_1$.
\end{proposition}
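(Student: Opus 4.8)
The plan is a perturbation argument on top of Proposition~\ref{prop:contact}. The point is that when $p$ is close to one, the contact process with a sterile state coincides, on each space-time block of Durrett and Neuhauser's construction, with the basic contact process, except on the event that some birth arrow in the (slightly enlarged) block is sterile, which has small probability; since the good events $G^1_{m,n}$ depend only on the graphical representation inside $\B^1_{m,n}$, the domination in Proposition~\ref{prop:contact} then transfers with the parameter degraded from $1-\ep_1/2$ to $1-\ep_1$.

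Concretely, I would build $\xi$ from the labelled graphical representation of Section~\ref{sec:coupling}: a rate-$\lambda/2d$ Poisson process on each oriented edge $\vec{xy}$, every arrival being independently fertile with probability $p$ and sterile with probability $q$, plus rate-one death marks. When $p=1$ all arrows are fertile, so $\xi$ is the basic contact process and Proposition~\ref{prop:contact}, applied with $\ep_1/2$ in place of $\ep_1$, gives $L_1<\infty$ and events $G^1_{m,n}$ measurable with respect to the arrows and death marks in $\B^1_{m,n}$, with $P(G^1_{m,n})\ge 1-\ep_1/4$ and the implication in~\eqref{eq:CP-1}(b); enlarging $L_1$ we may also assume $C_d L_1^d e^{-L_1^2}<\ep_1/4$, where $C_d$ bounds the cardinality of the shell $S_m:=\big([-4L_1,4L_1]^d\setminus[-3L_1,3L_1]^d\big)+mL_1$. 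For $\xi$ I would then declare $(m,n)$ good through $\hat G^1_{m,n}:=G^1_{m,n}\cap A_{m,n}\cap B_{m,n}$, where $A_{m,n}$ is the event that no sterile arrow points into $[-4L_1,4L_1]^d+mL_1$ during $(nL_1^2,(n+1)L_1^2]$ and $B_{m,n}$ is the event that every site of $S_m$ carries a death mark during that interval. All three events are measurable with respect to the graphical representation in the fixed enlargement $[-4L_1,4L_1]^d+mL_1$ of $\B^1_{m,n}$ over its time span, so the percolation stays of finite range. The number of arrows pointing into that enlargement during the time interval is Poisson with a finite mean $\mu=\mu(L_1,\lambda,d)$, and conditionally on that number the labels are i.i.d., so $P(A_{m,n})=e^{-\mu(1-p)}$; hence there is $p_+=p_+(L_1)<1$ with $P(A_{m,n})>1-\ep_1/2$ for $p>p_+$, and a union bound over $S_m$ gives $P(\hat G^1_{m,n})\ge 1-\ep_1$.

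The key step is then the propagation $E^1_{m,n}\cap\hat G^1_{m,n}\Rightarrow E^1_{m',n'}$ for every child $(m',n')\leftarrow(m,n)$. On $E^1_{m,n}$ there is no $-1$ in $[-3L_1,3L_1]^d+mL_1$ at time $nL_1^2$; since sterile individuals are immobile, never reproduce, and are created only along sterile arrows---which $A_{m,n}$ excludes from $[-4L_1,4L_1]^d+mL_1$---the only $-1$s that can sit in this enlargement during the level are those already in $S_m$ at time $nL_1^2$, and $B_{m,n}$ forces each of them to be removed by a death mark before time $(n+1)L_1^2$. Consequently there is no $-1$ in $[-4L_1,4L_1]^d+mL_1$ at time $(n+1)L_1^2$, hence none at the bottom of $\B^1_{m',n'}$, whose spatial footprint is contained in $[-4L_1,4L_1]^d+mL_1$. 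Moreover, since no $-1$ is present in $[-3L_1,3L_1]^d+mL_1$ throughout the level and every birth arrow acting there is fertile, the $+1$-configuration of $\xi$ restricted to that footprint evolves exactly like the basic contact process; as the dual paths of Durrett and Neuhauser's construction stay inside $\B^1_{m,n}$, the implication~\eqref{eq:CP-1}(b) applies verbatim and places a $+1$ in each small cube of $\A^1_{m',n'}$. Thus $E^1_{m',n'}$ holds, and combining this with $P(\hat G^1_{m,n})\ge 1-\ep_1$ and the finite-range property, the comparison theorem~\cite[Theorem~A.4]{durrett_1995} yields the claimed domination.

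The main obstacle is precisely the bookkeeping with the sterile individuals in the above implication: one must make sure that a $-1$ lying in the footprint of a child block at the end of the level cannot have escaped control, either by being freshly created there or by being a long-lived sterile individual coming from just outside $\B^1_{m,n}$. Both possibilities are killed by the two small enlargements built into $\hat G^1_{m,n}$---forbidding sterile arrows on $[-4L_1,4L_1]^d+mL_1$, which already contains the footprints of all children, and clearing out the shell $S_m$ via death marks---at a probabilistic cost that vanishes as $L_1\to\infty$ and as $p\to 1$ and without enlarging the dependence range beyond a fixed finite value. Apart from this, the argument is a direct transcription of the $p=1$ block construction.
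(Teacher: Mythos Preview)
Your argument is correct and follows the same perturbation strategy as the paper: intersect the Durrett--Neuhauser good event $G^1_{m,n}$ with an event forbidding sterile arrows in the block, so that on the intersection the process coincides with the basic contact process inside $\B^1_{m,n}$ and \eqref{eq:CP-1}(b) applies. The paper simply sets $\bar G^1_{m,n}=G^1_{m,n}\cap\{\text{no type $-$ arrows in }\B^1_{m,n}\}$ and computes $P(\bar G^1_{m,n})\ge 1-\ep_1$, whereas you enlarge the no-sterile-arrow region to $mL_1+[-4L_1,4L_1]^d$ and add the shell-clearing event $B_{m,n}$. Your extra bookkeeping is not wasted: since $|m-m'|=1$, the bottom of the child block $\B^1_{m',n'}$ overhangs the spatial footprint of $\B^1_{m,n}$ by $L_1$, so the paper's $F_{m,n}$ alone does not literally enforce the ``no $-1$'' clause of $E^1_{m',n'}$ on that overhang; your $A_{m,n}$ and $B_{m,n}$ close precisely this gap, at the price of a slightly larger (but still finite) dependence range than the stated $6$.
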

\begin{proof}
 It suffices to find good events~$\bar G_{m, n}^1$ such that~\eqref{eq:CP-1} holds for~$\ep_1$ instead of~$\ep_1 / 2$, and~$p$ close to one instead of~$p = 1$.
 To do this, we let
 $$ \bar G_{m, n}^1 = F_{m, n} \cap G_{m, n}^1 \quad \hbox{where} \quad F_{m, n} = \{\hbox{no type~$-$ arrows in} \ \B_{m, n}^1 \}. $$
 Because, on the event~$F_{m, n}$, the graphical representation of the process matches the graphical representation of the basic contact process in~$\B_{m, n}^1$, it follows from~(\ref{eq:CP-1}.b) that
\begin{equation}
\label{eq:survive1}
  E_{m, n}^1 \cap \bar G_{m, n}^1 \ \Longrightarrow \ E_{m', n'}^1 \ \ \hbox{for all} \ \ (m', n') \leftarrow (m, n).
\end{equation}
 Now, notice that the number of type~$-$ arrows in~$\B_{m, n}^1$ is equal to
 $$ Z_1 = \poisson (\Lambda_1 (1 - p)) \quad \hbox{where} \quad \Lambda_1 = \lambda (6L_1 + 1)^d L_1^2. $$
 In particular, the probability of~$F_{m, n}$ satisfies
\begin{equation}
\label{eq:survive2}
  P (F_{m, n}) = P (Z_1 = 0) = e^{- \Lambda_1 (1 - p)} \geq 1 - \ep_1 / 2
\end{equation}
 whenever the parameter~$p$ is larger than
 $$ p_+ = 1 + \ln (1 - \ep_1 / 2) / \Lambda_1 < 1. $$
 Combining~(\ref{eq:CP-1}.a) and~\eqref{eq:survive2}, we deduce that, for all~$p > p_+$,
\begin{equation}
\label{eq:survive3}
  P (\bar G_{m, n}^1) \geq 1 - P (F_{m, n}^c) - P ((G_{m, n}^1)^c) \geq 1 - \ep_1 / 2 - \ep_1 / 2 = 1 - \ep_1.
\end{equation}
 The proposition follows from~\eqref{eq:survive1} and~\eqref{eq:survive3}.
\end{proof} \vspace*{8pt} \\
 The last ingredient to deduce the theorem is the following percolation result whose proof relies on a contour argument and can be found in~\cite[Theorem~A.1]{durrett_1995}.
\begin{lemma}
\label{lem:contour}
 Let~$\ep_1 = 6^{-4 \times 13^2}$.
 Then, the 6-dependent oriented site percolation process on the directed graph~$\vec \Lat_1$ with parameter~$1 - \ep_1$ is supercritical, i.e., the cluster of wet sites starting at the origin is infinite with positive probability.
\end{lemma}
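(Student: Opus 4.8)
The result is \cite[Theorem~A.1]{durrett_1995}, and the plan is to reproduce its Peierls-type contour argument. Write~$M = 6$ for the range of dependence, so that~$2M + 1 = 13$; the process declares each site of~$\vec \Lat_1$ closed with probability at most~$\ep_1 = 6^{-4 \times 13^2}$, and the statuses of two sites are independent as soon as their coordinates are sufficiently separated. The first reduction is that it suffices to work in two dimensions: restricting the percolation on~$\vec \Lat_1 \subset \Z^d \times \Z_+$ to the tube~$\{(m, n) : m_2 = \cdots = m_d = 0 \}$ yields again a~$6$-dependent oriented site percolation, now on~$\Z \times \Z_+$ and with the same marginals, and survival in the tube forces survival on~$\vec \Lat_1$ because deleting available sites can only hurt. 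So from now on we work in~$\Z \times \Z_+$.

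Next comes the geometric dichotomy: if the cluster~$W$ of wet sites grown from the origin is finite, then~$W$ is separated from infinity by a \emph{contour}~$\gamma$, that is, a circuit in a fixed bounded-range dual adjacency on~$\Z \times \Z_+$ all of whose sites can be taken to be closed. The "closed" part is a standard pivotality argument: exploring the wet cluster outward and revealing site statuses only as needed, the first layer that blocks the exploration must be entirely closed. Hence
\[
  P(\hbox{the wet cluster of the origin is finite}) \ \le \ \sum_{\ell \ge \ell_0} N(\ell) \, q(\ell),
\]
where~$N(\ell)$ is the number of contours of length~$\ell$ that can enclose the origin and~$q(\ell)$ bounds the probability that all the sites of a fixed such contour are closed.

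Two estimates close the argument. First, a contour is essentially a self-avoiding circuit in a bounded-degree graph anchored near the origin, so~$N(\ell) \le C \, \mu^{\ell}$ for an absolute~$C$ and an absolute base~$\mu$ that is bounded by~$6^4$. Second --- and this is where~$6$-dependence is used --- inside any contour of length~$\ell$ one extracts, by partitioning~$\Z \times \Z_+$ into~$13^2$ residue classes, a subset~$S'$ with~$|S'| \ge \ell / 13^2$ whose sites have disjoint neighborhoods and hence mutually independent statuses, so that~$q(\ell) \le \ep_1^{|S'|} \le \ep_1^{\,\ell / 13^2}$. Substituting and summing the geometric series,
\[
  P(\hbox{the wet cluster of the origin is finite}) \ \le \ C \sum_{\ell \ge \ell_0} \bigl( \mu \, \ep_1^{1 / 13^2} \bigr)^{\ell} \ < \ 1,
\]
and the value~$\ep_1 = 6^{-4 \times 13^2}$ is precisely the one making~$\mu \, \ep_1^{1/13^2} = \mu \cdot 6^{-4} < 1$, with enough room for the tail (and any polynomial prefactor) to stay below one. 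Thus the wet cluster of the origin is infinite with positive probability.

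I expect the genuinely delicate point to be the geometric construction of the contour in the \emph{oriented} setting: a finite oriented cluster is not automatically encircled the way an undirected one is, so one works through the left/right edge-process description (or an equivalent dual circuit) to produce a bona fide separating contour and to certify that it consists of closed sites. After that, the~$6$-dependence is dispatched routinely by the residue-class thinning above --- an alternative being to invoke Liggett--Schonmann--Stacey stochastic domination of~$k$-dependent fields by product measures, though at the cost of the explicit constant --- and all that remains is the bookkeeping of~$C$, $\mu$ and~$\ell_0$ that pins down the stated threshold; this is carried out in detail in~\cite[Theorem~A.1]{durrett_1995}.
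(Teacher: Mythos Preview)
The paper does not actually prove this lemma but simply cites \cite[Theorem~A.1]{durrett_1995}; your proposal invokes the same reference and correctly sketches its Peierls-type contour argument (reduction to two dimensions, contour around a finite wet cluster, exponential counting of contours, and residue-class thinning to extract $\ell/(2M+1)^2$ independent closed sites), so the approaches coincide. Your sketch is more detailed than what the paper offers, and your identification of the oriented-contour construction as the delicate step is accurate.
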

\noindent
 To deduce Theorem~\ref{th:survival}, fix~$\ep_1$ like in Lemma~\ref{lem:contour}, then~$L_1 < \infty$ like in Proposition~\ref{prop:survive}.
 By the proposition, the set of~1-good sites, corresponding to blocks occupied by fertile individuals, dominates the set of wet sites in the percolation process with parameter~$1 - \ep_1$.
 Note also that, starting from a translation-invariant product measure with a positive density of~$+1$s, there must be infinitely many~1-good sites at level~$n = 0$.
 Finally, because the percolation process is supercritical according to the lemma, there is survival of the~$\pm 1$s in the interacting particle system.

\section{Extinction when~$p < 1/4d$}
\label{sec:extinction}
 In this section, we prove an exponential decay of the process when~$p < 1/4d$ regardless of the value of the birth rate.
 We first show that, starting with a single~1, the total number of~1s in the process is dominated by the total progeny of a subcritical Galton-Watson branching process that itself exhibits an exponential decay, from which we can deduce an exponential decay in space and time of the particle system.
 To extend the result to the process starting from any initial configuration, we use a block construction to couple the process with oriented site percolation in which sites are open with probability arbitrarily close to one.
 The presence of a~$\pm 1$ at a time that scales like~$n$ in the particle system implies the presence of a path of closed sites of length at least~$n$ in the percolation process whose probability decreases exponentially with~$n$. \\
\indent
 To begin with, we study the process~$\xi^{(0, 0)}$ starting with a single~1 at the origin while all the other sites are empty.
 Let~$|\xi^{(0, 0)}|$ denote the total number of~1s~(including the initial~1) in the process across space and time until possible extinction.
\begin{lemma}
\label{lem:offspring}
 The random variable~$|\xi^{(0, 0)}|$ is dominated by the total progeny of the Galton-Watson branching process whose offspring distribution~$Y$ has probability-generating function
 $$ G_Y (s) = \frac{\bar p (ps + q)^{2d}}{1 - (1 - \bar p)(ps + q)} \quad \hbox{where} \quad \bar p = \frac{1}{2d + 1}. $$
\end{lemma}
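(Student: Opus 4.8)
The plan is to couple the spread of $1$s starting from a single $1$ at the origin with a Galton--Watson branching process, by tracking the \emph{fertile} individuals as the ``particles'' of the branching process and bounding the number of fertile offspring each one produces. The key observation is that a fertile individual at a site $x$ gives birth along the $2d$ oriented edges out of $x$, and these birth events happen at total rate $\lambda$ (rate $\lambda/2d$ per edge), competing with a rate-$1$ death clock at $x$. However, the \emph{effective} births that matter are only those onto empty target sites, and once a target site is filled (by a $+$ or a $-$) that edge is ``used up'' until the target dies. To get a clean domination that does not track the configuration, I would instead bound things as follows: consider the fertile individual at $x$ together with the $2d$ directed edges leaving $x$; along each such edge, list the successive ring times of that edge's clock until the death clock at $x$ rings. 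On each edge, the \emph{first} ring produces a (potential) offspring, and each subsequent ring on the same edge produces another one only if the previous occupant of the target has died --- but rather than analyze that, I would dominate by saying: on each edge, with probability depending on the race between that edge's Poisson clock (rate $\lambda/2d$) and the death clock (rate $1$), there are $1, 2, 3, \dots$ rings; the number of rings on a given edge before the death of $x$ is $\geometric$-type. Summing the rings over the $2d$ edges, and noting each ring independently yields a fertile child with probability $p$, gives a bound on the number of fertile children of a single fertile individual.

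More carefully, here is the combinatorial structure I expect underlies the stated generating function. Take the fertile individual at $x$. Its death clock competes with the superposition of all relevant birth clocks. Consider the sequence of events ``a birth arrow fires from $x$'' versus ``$x$ dies.'' A cleaner bookkeeping: look at the $2d$ edges out of $x$; the first arrow to fire along \emph{any} edge, before $x$ dies, lands an offspring; but to avoid configuration-dependence I would give each edge its own budget. The factor $(ps+q)^{2d}$ is exactly the generating function of ``each of the $2d$ edges independently produces at most one offspring'' where that offspring, if it exists, is fertile ($s$-weight, probability $p$) or sterile (weight $1$, probability $q$) --- i.e. $2d$ guaranteed ``slots'', one per neighbor, each filled by a fertile child w.p.\ $p$. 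The extra factor $\bar p/(1-(1-\bar p)(ps+q))$ with $\bar p = 1/(2d+1)$ accounts for \emph{additional} births beyond the initial $2d$: after the first $2d$ births are accounted for, each further birth event happens only if the focal individual is still alive, and at each such moment the competition is between ``one more birth'' (the relevant Poisson rate, normalized so a birth vs.\ death event ratio gives the $2d:1$ split encoded in $\bar p = 1/(2d+1)$, since death rate $1$ against total birth rate $2d$ in the relevant normalization) and ``death,'' with a birth being fertile w.p.\ $p$. That is a geometric number of extra children, each fertile w.p.\ $p$ and sterile w.p.\ $q$, contributing the geometric generating function $\sum_{k\ge 0}(1-\bar p)^k (ps+q)^k \bar p = \bar p/(1-(1-\bar p)(ps+q))$. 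Multiplying the two independent contributions gives $G_Y(s)$.

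Having set up this construction, the domination argument runs: build the particle system from the graphical representation; declare a ``branching tree'' rooted at the initial $1$; whenever a fertile individual is created, it becomes a particle whose children are the individuals ($+$ or $-$) it gives birth to during its lifetime, where each birth is attributed to the edge it fired along, and births are recorded \emph{whether or not} the target site is actually empty (a recorded birth onto a non-empty site is a ``ghost,'' only inflating the count). Every $1$ ever present in $\xi^{(0,0)}$ is a fertile particle in this tree (sterile individuals are leaves and are not counted), so $|\xi^{(0,0)}|$ is at most the number of non-leaf-plus counted nodes, which is dominated by the total progeny of the Galton--Watson process with offspring law $Y$ --- here we only count the fertile children as offspring that continue the process, but we must verify that the \emph{count} $|\xi^{(0,0)}|$ of fertile individuals is $\le$ total progeny of the GW tree whose individuals are fertile and whose offspring number per individual is (number of fertile children). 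Concretely: in the GW tree, each individual corresponds to a fertile particle, and its offspring count is the number of its fertile children, whose generating function is exactly $G_Y$; the total progeny of this tree is then $\geq |\xi^{(0,0)}|$ by the ghost-inflation just described. The main obstacle, and the step requiring genuine care, is the independence/domination claim: across the tree, different fertile particles may have \emph{dependent} birth histories because they compete for the same sites and share edges of the graphical representation. The standard fix is to use \emph{fresh} randomness: assign to each fertile particle, at its birth time, an independent copy of the clock configuration on its outgoing edges and its own death clock, so that the number of its fertile children is an independent copy of $Y$; the real process, read off the true graphical representation, is then stochastically dominated by this idealized tree because ghost births only add individuals and because ``the target is empty'' is an extra constraint we drop. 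Making this coupling precise --- building the idealized i.i.d.\ tree on the same probability space and checking the pathwise domination $|\xi^{(0,0)}| \le$ (total progeny) --- is where the real work lies; everything else is the generating-function bookkeeping sketched above.
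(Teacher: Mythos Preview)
Your decomposition of $G_Y$ into the factor $(ps+q)^{2d}$ times a geometric factor is correct, and the overall branching-process domination idea is right, but the mechanism you give for the geometric piece is wrong and this is where the argument breaks. You write that after the initial $2d$ births the competition is ``death rate~1 against total birth rate~$2d$,'' but the birth rate is $\lambda$, not $2d$; with your reasoning the number of extra children would have a geometric parameter $1/(\lambda+1)$, not $1/(2d+1)$, and the bound would be useless for large~$\lambda$. Likewise, your later ``ghost birth'' bookkeeping---recording every arrow that fires regardless of whether the target is empty---gives a Poisson($\lambda \cdot \hbox{lifespan}$) number of children per individual, which again blows up with~$\lambda$ and cannot produce the stated $\lambda$-free distribution~$Y$.

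The paper's mechanism is different and is the missing idea: once the focal fertile individual has filled all $2d$ neighboring sites, each \emph{additional} successful birth must be preceded by a death at one of those neighbors (otherwise there is no empty target). So the number of offspring beyond the first $2d$ is at most the number of deaths among the $2d$ neighbors before the focal individual itself dies. Since all $2d+1$ sites carry independent rate-one death clocks, each death in this neighborhood is the focal one with probability $\bar p = 1/(2d+1)$ and a neighbor with probability $1-\bar p$; hence the number~$N$ of neighbor deaths before the focal death is a shifted geometric with success probability~$\bar p$. This gives the domination by $2d+N$ offspring independently of~$\lambda$, each offspring fertile with probability~$p$, and the generating-function computation is then straightforward conditioning on~$N$. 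Your independence worry is also less severe than you suggest: the quantities $2d+N$ at different fertile individuals involve disjoint death clocks (each individual's own clock and those of its neighbors over its own lifetime), and the Bernoulli($p$) fertility marks are independent by construction, so the domination by an i.i.d.\ offspring tree is immediate once the correct per-individual bound is in place.
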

\begin{proof}
 Because individuals of either type die independently at rate one, it follows from the superposition property for Poisson processes that the number of deaths~(number of times an occupied site becomes empty) in the neighborhood of a~1 during its lifespan is dominated by the shifted geometric random variable~$N$ with success probability
 $$ \bar p = 1 / (\hbox{number of neighbors} + 1) = 1 / (2d + 1). $$
 This implies that the total number of offspring~(number of births of~$\pm 1$s onto an empty site) produced by a single~1 is dominated by the random variable
 $$ 2d + N = 2d + \geometric (\bar p) - 1. $$
 Since in addition each offspring is independently fertile~(of type~1) with probability~$p$, we deduce that the number of~1s produced by a single~1 is dominated by
 $$ Y = Y_1 + Y_2 + \cdots + Y_{2d + N} \quad \hbox{where} \quad Y_i = \hbox{independent} \bernoulli (p). $$
 The probability-generating functions of~$Y_i$ and~$N$ are given by
 $$ G_{Y_i} (s) = E (s^{Y_i}) = ps + q \quad \hbox{and} \quad G_N (s) = E (s^N) = \bar p / (1 - (1 - \bar p) s). $$
 Using also the independence of the random variables, we conclude that
 $$ \begin{array}{rcl}
      G_Y (s) & \n = \n & E (s^Y) = E (E(s^Y \,| \,N)) = E ((ps + q)^{2d + N}) \vspace*{-2pt} \\
              & \n = \n & \displaystyle (ps + q)^{2d} E ((ps + q)^N) = (ps + q)^{2d} G_N (ps + q) = \frac{\bar p (ps + q)^{2d}}{1 - (1 - \bar p)(ps + q)}, \end{array} $$
 which proves the lemma.
\end{proof} \vspace*{8pt} \\
 Note that, when~$p < 1/4d$, the expected number of offspring is
 $$ E (Y) = p (2d + E (N)) = p (2d + 1 / \bar p - 1) = p( 2d + 2d + 1 - 1) = 4dp < 1, $$
 therefore the process starting from a single~1 dies out by the lemma.
 To deal with more general initial configurations, for all~$x \in \Z^d$, let~$\xi^{(x, 0)}$ be the process starting at time~0 with a single~1 at site~$x$ constructed from the same graphical representation as~$\xi$.
 Then, each of these processes dies out, while the process~$\xi$ is also dominated by the superposition of these non-interacting processes~(that allows multiple~1s per site), so the process~$\xi$ dies out as well.
 We now improve this result, showing an exponential decay that we will need later to study the multitype process.
\begin{lemma}
\label{lem:progeny}
 Let~$p < 1/4d$ and let~$X$ be the total progeny in the Galton-Watson branching process introduced above.
 Then, there exist~$C_1 < \infty$ and~$s_1 = s_1 (p, d) > 1$ such that
 $$ P (X > n) \leq C_1 s_1^{-n} \quad \hbox{for all} \quad n \in \N. $$
\end{lemma}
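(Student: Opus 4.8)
The plan is to combine the standard fixed-point description of the total-progeny generating function with Markov's inequality. Writing $\varphi(s) = E(s^X)$, the branching decomposition $X = 1 + \sum_{i=1}^{Y} X_i$, with the $X_i$ i.i.d.\ copies of $X$ independent of $Y$, shows that $\varphi$ solves $\varphi(s) = s\,G_Y(\varphi(s))$. Moreover, setting $X_k = \sum_{j \leq k} Z_j$ (the progeny through generation~$k$) we have $X_k \uparrow X$, hence, for $s \geq 1$, by monotone convergence $h_k(s) := E(s^{X_k}) \uparrow \varphi(s) \in [1, \infty]$, while the recursion gives $h_0(s) = s$ (or we may start from $h_0 \equiv 0$) and $h_{k+1}(s) = s\,G_Y(h_k(s))$, the iteration being monotone because $G_Y$ has nonnegative Taylor coefficients. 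Thus it suffices to exhibit some $s_1 > 1$ with $\varphi(s_1) < \infty$: since $X$ is a positive integer, $\{X > n\} = \{s_1^X \geq s_1^{n+1}\}$, and Markov's inequality then gives $P(X > n) \leq s_1^{-n-1}\,\varphi(s_1) \leq C_1 s_1^{-n}$.

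To locate such an $s_1$, I would examine $G_Y$ just beyond the point $s = 1$. The denominator $1 - (1 - \bar p)(ps + q)$, as a function of its argument, vanishes only at $h_\ast := 1 + 1/(2dp) > 1$, so $G_Y$ extends to a finite, strictly positive, strictly increasing function on $[0, h_\ast)$ with $G_Y(1) = 1$ and $G_Y'(1) = E(Y) = 4dp < 1$ (here using $p < 1/4d$). Consider $g(h) = h / G_Y(h)$ on $[0, h_\ast)$: then $g(1) = 1$ and $g'(1) = 1 - G_Y'(1) = 1 - 4dp > 0$, so $g(h) > 1$ for all $h$ in some interval $(1, 1 + \delta)$, which we may take to lie inside $(1, h_\ast)$. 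Fix any $h^\dagger$ in this interval and set $s_1 = g(h^\dagger) = h^\dagger / G_Y(h^\dagger) > 1$; then $h^\dagger$ is a fixed point of the map $h \mapsto s_1 G_Y(h)$, and $G_Y$ is finite on all of $[0, h^\dagger]$.

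Finally I would run the iteration at $s = s_1$. Since $h_0(s_1) \leq h^\dagger$ and, by monotonicity of $G_Y$, $h_k(s_1) \leq h^\dagger$ forces $h_{k+1}(s_1) = s_1 G_Y(h_k(s_1)) \leq s_1 G_Y(h^\dagger) = h^\dagger$, induction yields $h_k(s_1) \leq h^\dagger$ for every $k$, so $\varphi(s_1) = \lim_k h_k(s_1) \leq h^\dagger < \infty$. Plugging this into the Markov bound above gives $P(X > n) \leq h^\dagger s_1^{-n}$, which proves the lemma with $s_1 = h^\dagger / G_Y(h^\dagger) > 1$ and $C_1 = h^\dagger < \infty$.

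I expect the only genuinely delicate point to be the bookkeeping behind the identity $E(s^{X_{k+1}}) = s\,G_Y(E(s^{X_k}))$ and the monotone passage to the limit $\varphi(s) = E(s^X)$ for $s \geq 1$ — this is the classical description of the generating function of the total progeny of a Galton–Watson process, and once it is in place the location of the pole $h_\ast$, the sign computation $g'(1) > 0$, the choice of $h^\dagger$, and the final induction are all routine.
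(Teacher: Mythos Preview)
Your proof is correct and follows essentially the same route as the paper: both invoke the functional equation $G_X(s) = s\,G_Y(G_X(s))$ for the total-progeny generating function, use the subcriticality $G_Y'(1)=4dp<1$ to produce some $s_1>1$ with $G_X(s_1)<\infty$, and then apply Markov's inequality. The one difference is in how the finiteness of $G_X(s_1)$ is obtained: the paper differentiates the functional equation at $s=1$ to compute $G_X'(1)=1/(1-4dp)$ and from this infers that the radius of convergence exceeds~$1$, whereas you build an explicit supersolution $h^\dagger$ of $h=s_1 G_Y(h)$ and trap the truncated-progeny iterates $h_k(s_1)$ below it by induction --- your version is a bit more self-contained on this point, while the paper's is shorter.
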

\begin{proof}
 According to~\cite[Section~1.13]{harris_1963}, the probability-generating function~$G_X$ of the total progeny is obtained from~$G_Y$ through the relationship
 $$ G_X (s) = s \,G_Y (G_X (s)). $$
 Recalling~$G_Y$ from Lemma~\ref{lem:offspring}, this becomes
 $$ (1 - (1 - \bar p)(p \,G_X (s) + q)) \,G_X (s) = s \bar p (p \,G_X (s) + q)^{2d}. $$
 Taking the derivative on both sides, we get
 $$ \begin{array}{l}
      - (1 - \bar p) p \,G_X' (s) \,G_X (s) + (1 - (1 - \bar p)(p \,G_X (s) + q)) \,G_X' (s) \vspace*{4pt} \\ \hspace*{80pt}
      = \bar p (p \,G_X (s) + q)^{2d} + 2ds \bar p p G_X' (s) \,(p \,G_X (s) + q)^{2d - 1}. \end{array} $$
 Taking~$s = 1$, and using that~$G_X (1) = 1$ and~$p + q = 1$, we get
 $$ - (1 - \bar p) p \,G_X' (1) + \bar p \,G_X' (1) = \bar p + 2d \bar p p \,G_X' (1). $$
 Recalling that~$\bar p = 1 / (2d + 1)$ and~$p < 1/4d$, we deduce that
 $$ G_X' (1) = \frac{\bar p}{\bar p - (2d \bar p + (1 - \bar p)) p} = \frac{\bar p}{\bar p - (1 + (2d - 1) \bar p) p} = \frac{1}{1 - 4dp} > 0, $$
 showing that there exists~$s_1 = s_1 (p, d) > 1$ such that~$C_1 = G_X (s_1)$ is finite and positive.
 Finally, because~$s_1 > 1$, it follows from Markov's inequality that
 $$ P (X > n) = P (s_1^X > s_1^n) \leq E (s_1^X) / s_1^n = G_X (s_1) \,s_1^{-n} = C_1 s_1^{-n}, $$
 which proves an exponential decay of the progeny.
\end{proof} \vspace*{8pt} \\
 Using the previous two lemmas, we can now prove an exponential decay in space and time for the contact process~$\xi^{(0, 0)}$ when~$p < 1/4d$ regardless of the value of the birth rate~$\lambda$.
\begin{lemma}
\label{lem:decay}
 Let~$p < 1/4d$.
 Then, there exist~$C_2 < \infty$~and $s_2 > 1$ such that
 $$ P (\xi^{(0, 0)} \not \subset [-n, n]^d \times [0, n]) \leq C_2 s_2^{-n} \quad \hbox{for all $n \in \N$ and all~$\lambda \in [0, \infty]$}. $$
\end{lemma}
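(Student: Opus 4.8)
The plan is to derive the estimate from control on two quantities attached to the branching process of Lemma~\ref{lem:offspring}: its total progeny $X$, and the extinction time of the fertile individuals of $\xi^{(0,0)}$. Write $\xi^{(0,0)}$ also for the space--time set of sites occupied by a fertile individual, set $K=|\xi^{(0,0)}|$, and let $M=\sup\{t\geq 0:\xi_t^{(0,0)}\neq\varnothing\}$. By Lemma~\ref{lem:offspring}, $K$ is stochastically dominated by $X$, and by Lemma~\ref{lem:progeny} there are $C_1<\infty$ and $s_1>1$ with $P(X>m)\leq C_1 s_1^{-m}$ for all $m\in\N$; none of these objects depends on $\lambda$ and the domination holds for every $\lambda\in[0,\infty]$, which is the source of the uniformity in $\lambda$.

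The first step is the event inclusion
$$ \{\xi^{(0,0)}\not\subset[-n,n]^d\times[0,n]\}\ \subseteq\ \{K>n\}\cup\{M>n\}. $$
Indeed, if some fertile individual ever occupies a site $x\notin[-n,n]^d$, its ancestral line back to the origin consists of more than $n$ distinct fertile individuals (its generation is at least $\|x\|_1\geq n+1$), so $K>n$; and if some fertile individual is present at a time $t>n$, then $M>n$. The first event is handled immediately: $P(K>n)\leq P(X>n)\leq C_1 s_1^{-n}$.

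For the second event I would bound $M$ by a sum of independent exponential lifetimes. One may realize $\xi^{(0,0)}$ from an i.i.d.\ $\exponential(1)$ sequence $(L_i)_{i\geq 1}$, assigned to the fertile individuals in breadth-first order of the genealogy, $L_i$ being the lifetime of the $i$-th such individual (in the graphical representation each fertile individual survives for an $\exponential(1)$ time independent of the past, by the strong Markov property). Since a fertile individual is born no later than its parent dies, induction along ancestral lines gives, for every individual, death time $\leq\sum_{i=1}^{K}L_i=:S$, hence $M\leq S$; moreover $S\leq\sum_{i=1}^{m}L_i$ on $\{K\leq m\}$. Therefore, with $m=\lceil n/2\rceil$ and any $\theta\in(0,1)$,
$$ P(M>n)\ \leq\ P(K>m)+P\Big(\sum_{i=1}^{m}L_i>n\Big)\ \leq\ C_1 s_1^{-m}+(1-\theta)^{-m}e^{-\theta n}, $$
the last term by Markov's inequality together with $E\big(e^{\theta\sum_{i=1}^{m}L_i}\big)=(1-\theta)^{-m}$. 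Taking $\theta=1/2$ bounds the right-hand side by $C_1 s_1^{-n/2}+\sqrt{2}\,(2/e)^{n/2}$, which, since $s_1>1$ and $2/e<1$, is at most $C s_3^{-n}$ for a suitable $C<\infty$ and $s_3=\min(\sqrt{s_1},\sqrt{e/2})>1$. Combining this with the bound on $\{K>n\}$ and enlarging the constant to cover the finitely many small $n$ yields $P(\xi^{(0,0)}\not\subset[-n,n]^d\times[0,n])\leq C_2 s_2^{-n}$ with $s_2=\min(s_1,s_3)$.

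The argument is short; the one point that needs care is the correlation between the number $K$ of fertile individuals and their lifetimes (a longer-lived individual tends to reproduce more), which would obstruct a naive ``condition on $K$ and integrate'' argument. This is sidestepped by fixing the order of the lifetimes in advance, so that $S\leq\sum_{i=1}^{m}L_i$ on $\{K\leq m\}$ holds as an almost-sure inequality and the event $\{K\leq m\}$ can simply be discarded, leaving the \emph{unconditional} law of $\sum_{i=1}^{m}L_i$; everything else reduces to Lemma~\ref{lem:progeny} and a one-line tail bound for a sum of exponentials.
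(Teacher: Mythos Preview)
Your argument is correct and mirrors the paper's proof almost step for step: the same split into spatial escape (bounded by $P(X>n)$ via Lemmas~\ref{lem:offspring}--\ref{lem:progeny}) and temporal escape (handled by cutting on $\{K>n/2\}$ and then applying Markov with parameter $1/2$ to a sum of $\lceil n/2\rceil$ exponential lifetimes). Your treatment of the correlation between $K$ and the $L_i$ via an almost-sure inclusion is in fact cleaner than the paper's, which simply writes the conditional probability on $\{|\xi^{(0,0)}|=k\}$ as an unconditional Gamma tail; the only thing the paper adds that you omit is a one-line remark that the bound for fertile individuals extends to the full process because each fertile parent produces at most $Y$ sterile offspring.
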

\begin{proof}
 For simplify, we only look at the probability that the fertile population escapes from the space-time block.
 Because sterile individuals can't give birth and each fertile individual gives birth to at most~$Y$ sterile individuals, similar estimates hold for the total population.
 To prove exponential decay of the radius of the process, note that, due to nearest neighbor interactions, for the~1s to travel a distance~$n$, there must be at least~$n$ fertile individuals.
 This, together with Lemmas~\ref{lem:offspring}--\ref{lem:progeny}, implies that there exist~$C_1 < \infty$ and~$s_1 > 1$ such that
\begin{equation}
\label{eq:decay1}
  P (\xi^{(0, 0)} \not \subset [-n, n]^d \times [0, \infty)) \leq P (|\xi^{(0, 0)}| > n) \leq P (X > n) \leq C_1 s_1^{-n}.
\end{equation}
 To prove exponential decay of the time to extinction, call individual~$i$ the~$i$th individual that appears in the process~(if this individual exists) and let~$T_i$ be the lifespan of this individual.
 Because individual~$i$ appears in the system before all the previous individuals die, the time to extinction is less than the sum of the lifespans, which implies that
\begin{equation}
\label{eq:decay2}
  P (\xi^{(0, 0)} \not \subset \Z^d \times [0, n] \,| \,|\xi^{(0, 0)}| = k \leq n/2) = P (T = T_1 + T_2 + \cdots + T_k > n).
\end{equation}
 Using Markov's inequality and that~$T_i = \exponential (1)$,
\begin{equation}
\label{eq:decay3}
\begin{array}{rcl}
  P (T > n) & \n = \n & P (e^{T/2} > e^{n/2}) \leq E (e^{T/2}) \,e^{- n/2} = G_T (\sqrt{e}) \,e^{- n/2} \vspace*{4pt} \\
            & \n = \n & (G_{T_i} (\sqrt{e}))^k \,e^{- n/2} \leq (G_{T_i} (\sqrt{e}))^{n/2} \,e^{- n/2} \vspace*{4pt} \\
            & \n = \n & (1 - \ln (\sqrt{e}))^{- n/2} e^{- n/2} = (e/2)^{- n/2}. \end{array}
\end{equation}
 Combining~\eqref{eq:decay1}--\eqref{eq:decay3} and using again Lemmas~\ref{lem:offspring}--\ref{lem:progeny}, we conclude that
 $$ \begin{array}{rcl}
      P (\xi^{(0, 0)} \not \subset [-n, n]^d \times [0, n]) & \n \leq \n &
      P (\xi^{(0, 0)} \not \subset [-n, n]^d \times [0, \infty) \ \hbox{or} \ \xi^{(0, 0)} \not \subset \Z^d \times [0, n]) \vspace*{4pt} \\ & \n \leq \n &
      P (\xi^{(0, 0)} \not \subset [-n, n]^d \times [0, \infty)) + P (\xi^{(0, 0)} \not \subset \Z^d \times [0, n]) \vspace*{4pt} \\ & \n \leq \n &
      P (\xi^{(0, 0)} \not \subset [-n, n]^d \times [0, \infty)) \vspace*{4pt} \\ && \hspace*{0pt} + \
      P (|\xi^{(0, 0)}| > n/2) + P (\xi^{(0, 0)} \not \subset \Z^d \times [0, n] \,| \,|\xi^{(0, 0)}| \leq n/2) \vspace*{4pt} \\ & \n \leq \n &
      C_1 s_1^{-n} + C_1 s_1^{-n/2} + (e/2)^{- n/2}. \end{array} $$
 Because~$s_1 > 1$ and~$e/2 > 1$, this proves the lemma.
\end{proof} \vspace*{8pt} \\
 In preparation for the block construction, we now let~$L_2$ be a large integer to be fixed later, and consider the two space-time blocks
\begin{figure}[t!]
\centering
\scalebox{0.40}{\input{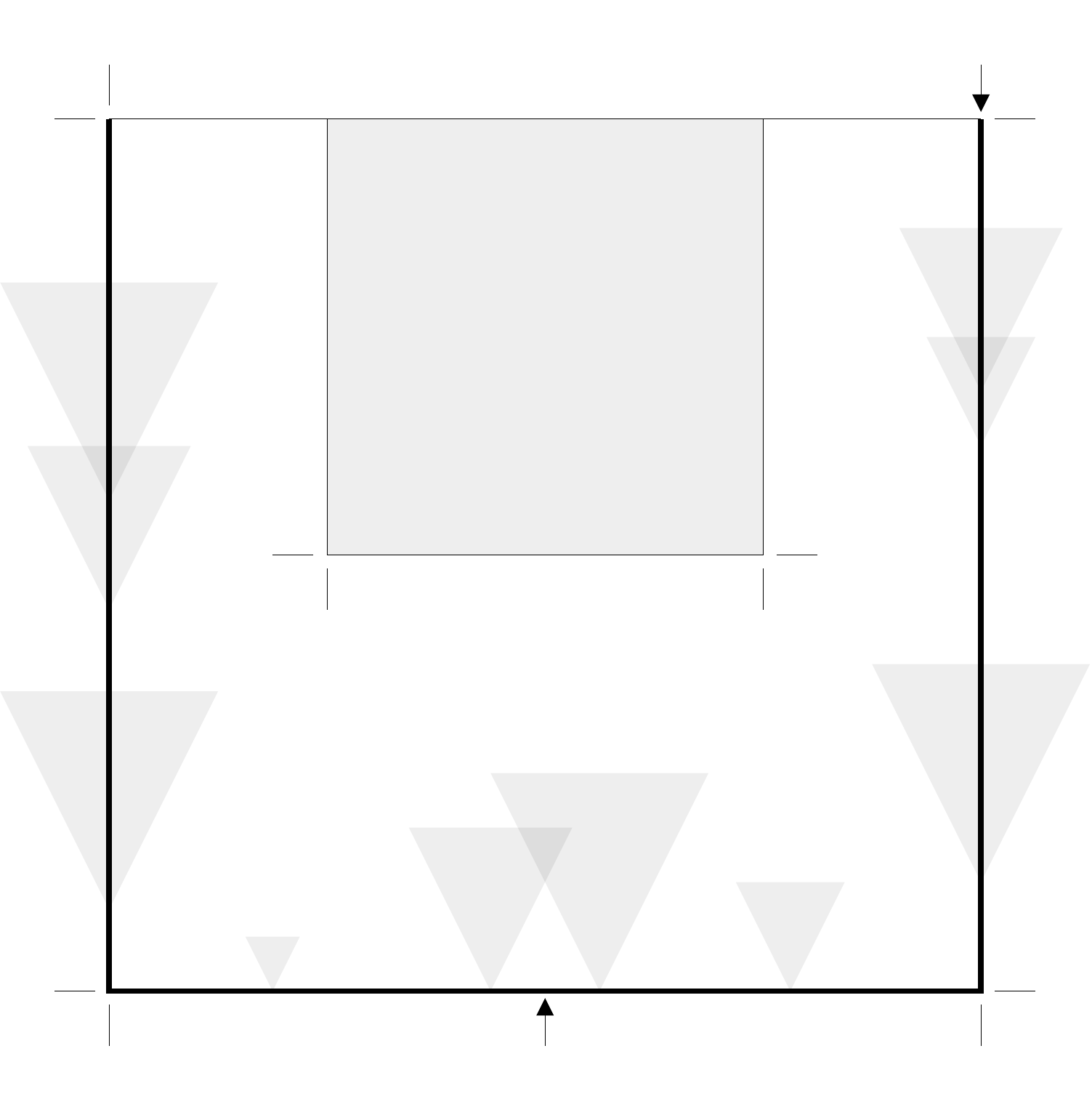_t}}
\caption{\upshape{Picture of the space-time blocks~$\A^2$ and~$\B^2$.}} 
\label{fig:empty}
\end{figure}
 $$ \A^2 = [-L_2, L_2]^d \times [L_2, 2L_2] \quad \hbox{and} \quad \B^2 = [-2L_2, 2L_2]^d \times [0, 2L_2]. $$
 In the next lemma, the bottom and the periphery of block~$\B^2$ refer to the sets
 $$ \begin{array}{rcl}
       \hbox{bottom of~$\B^2$} & \n = \n & [-2L_2, 2L_2]^d \times \{0 \}, \vspace*{4pt} \\
    \hbox{periphery of~$\B^2$} & \n = \n & \{x \in \Z^d : \norm{x}_{\infty} = 2L_2 \} \times [0, 2L_2]. \end{array} $$
 See Figure~\ref{fig:empty} for a picture of these sets.
\begin{lemma}
\label{lem:empty}
 For all~$\ep_2 > 0$, there exists~$L_2 < \infty$ such that~$P (\xi \cap \A^2 \neq \varnothing) \leq \ep_2$.
\end{lemma}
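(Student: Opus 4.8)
The plan is to trace every~$\pm 1$ appearing in~$\A^2$ back, through the graphical representation, to one of a controlled number of \emph{source particles} located on the bottom or the periphery of~$\B^2$, and then to bound, using the exponential decay of Lemma~\ref{lem:decay}, the probability that the progeny of a single source reaches~$\A^2$. Whatever the initial configuration, call a source particle either (i) a~$\pm 1$ present at a site of the bottom~$[-2L_2,2L_2]^d \times \{0\}$ of~$\B^2$ at time~$0$, or (ii) a~$\pm 1$ born onto a site of the periphery of~$\B^2$ by an arrow issued from a~$+1$ located outside~$\B^2$, at some time in~$[0,2L_2]$. Because the dynamics is nearest-neighbor, the lineage of any~$\pm 1$ sitting in~$\B^2$ either stays in the spatial slab~$[-2L_2,2L_2]^d$ all the way down to time~$0$, in which case its time-$0$ ancestor is a source of type~(i), or it visits a site outside~$\B^2$, in which case, at the first time its position re-enters the slab after having been outside, it arrives via a birth from a~$+1$ outside~$\B^2$ onto a periphery site, producing a source of type~(ii). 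Hence every~$\pm 1$ in~$\A^2$ descends from a source particle, and it remains to count the sources and to estimate the reach of each.

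There are at most~$(4L_2+1)^d$ sources of type~(i), one per site of the bottom of~$\B^2$. Writing~$Z_2$ for the number of sources of type~(ii), the crucial point -- and the reason the final bound is uniform over~$\lambda \in [0,\infty]$ -- is that in the graphical representation a site turns empty only at its own death marks, so a periphery site can be re-occupied only after a death. Thus the number of~$\pm 1$s born onto a given periphery site during~$[0,2L_2]$ is at most~$1 + \poisson(2L_2)$, no matter how large~$\lambda$ is, and summing over the~$(4L_2+1)^d - (4L_2-1)^d$ sites of the periphery, the superposition property of Poisson processes gives
$$ E(Z_2) \leq \big((4L_2+1)^d - (4L_2-1)^d\big)(1 + 2L_2), $$
which is polynomial in~$L_2$ and independent of~$\lambda$.

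Now fix a source particle, located at a space-time point~$(x,t)$. In the true process, the set of its descendants is contained in the non-interacting process started from a single~$\pm 1$ at~$(x,t)$, whose births are blocked only by its own occupants; by translation invariance this process has the law of a shift of~$\xi^{(0,0)}$, so Lemma~\ref{lem:decay} (applied with~$L_2-1$ in place of~$n$) confines it to~$(x,t) + ([-(L_2-1),L_2-1]^d \times [0,L_2-1])$ with probability at least~$1 - C_2\, s_2^{-(L_2-1)}$. On this event the progeny of the source misses~$\A^2$: for a type-(i) source because~$t = 0$ while~$\A^2 \subset \Z^d \times [L_2,2L_2]$, and for a type-(ii) source because~$\norm{x}_\infty = 2L_2$ while~$\A^2 \subset \{\,y : \norm{y}_\infty \leq L_2\,\} \times [0,\infty)$ and the spatial displacement is at most~$L_2-1$. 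A union bound over all sources -- handling the random number of type-(ii) sources by conditioning on the graphical representation up to the creation time of each such source and using the independence of the part of the graphical representation strictly after that time -- then yields
$$ P(\xi \cap \A^2 \neq \varnothing) \leq \big((4L_2+1)^d + E(Z_2)\big)\, C_2\, s_2^{-(L_2-1)}, $$
and since~$s_2 > 1$ and the prefactor is polynomial in~$L_2$, the right-hand side tends to~$0$ as~$L_2 \to \infty$; choosing~$L_2$ large enough makes it at most~$\ep_2$. The main obstacle is precisely this uniformity in~$\lambda$: a naive count of the arrows crossing the periphery of~$\B^2$ would have a mean proportional to~$\lambda$ and hence be useless when~$\lambda = \infty$, and the fix is to count only the births that actually take effect, each of which requires a preceding death at the target site. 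The remaining steps are a routine union bound over polynomially many sources carrying exponentially small probabilities, together with the domination of interacting progenies by non-interacting ones already used after Lemma~\ref{lem:progeny}.
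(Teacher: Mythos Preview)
Your proof is correct and follows essentially the same approach as the paper: decompose the particles in~$\B^2$ according to their entry point (bottom or periphery), dominate each lineage by a shifted copy of~$\xi^{(0,0)}$, and combine the polynomial count of sources with the exponential bound of Lemma~\ref{lem:decay}. The only notable difference is bookkeeping: the paper bounds the number of periphery sources by the Poisson variable~$Z_2$ of death marks on the periphery and then uses a large-deviation estimate $P(Z_2 > 2\Lambda_2) \le (4/e)^{-\Lambda_2}$ before applying the union bound on~$\{Z_2 \le 2\Lambda_2\}$, whereas you go straight through~$E(Z_2)$ via a stopping-time/Wald argument; both routes give a polynomial prefactor times~$s_2^{-L_2}$ and are equivalent in strength.
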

\begin{proof}
 We prove that the result holds regardless of the configuration of the process outside the space-time block~$\B^2$, including the initial configuration, meaning in particular that our estimates only depend on the graphical representation of the process inside block~$\B^2$.
 Having a realization of the graphical representation, we define the following processes.
\begin{itemize}
\item
 For each~$(x, 0)$ at the bottom of~$\B^2$, let~$\xi^{(x, 0)}$ be the process starting at time~0 with a single~1 at site~$x$ constructed from the same graphical representation as~$\xi$. \vspace*{4pt}
\item
 For each arrow from outside~$\B^2$ to a space-time point~$(x, t)$ on the periphery of~$\B^2$ that results in an actual birth, let~$\xi^{(x, t)}$ be the process starting at time~$t$ with a single~1 at site~$x$ constructed from the same graphical representation as~$\xi$.
\end{itemize}
 The superposition of these non-interacting processes~(that allows multiple~1s per site) dominates the process~$\xi$.
 In particular, it suffices to show that, with probability close to one, none of these processes invades block~$\A^2$.
 The first step is to control the number of such processes, corresponding to the number of entry points.
 There are~$(4L_2 + 1)^d$ sites at the bottom of~$\B^2$.
 The periphery has~$2d$ faces, each having~$(4L_2 + 1)^{d - 1}$ sites and height~$2L_2$ units of time.
 In addition, by the same argument as in the proof of Lemma~\ref{lem:offspring}, there is at most one successful birth between two consecutive death marks, so the number of entry points on the periphery is dominated by
 $$ Z_2 = \poisson (\Lambda_2) \ \hbox{with} \ \Lambda_2 = 2d \times (4L_2 + 1)^{d - 1} \times 2L_2 \times 1 = 4dL_2 (4L_2 + 1)^{d - 1}. $$
 Using again Markov's inequality, we get
 $$ P (Z_2 > 2 \Lambda_2) = P (s^{Z_2} > s^{2 \Lambda_2}) \leq G_{Z_2} (s) s^{- 2 \Lambda_2} = e^{- \Lambda_2 (1 - s)} s^{- 2 \Lambda_2} $$
 for all~$s > 1$.
 Taking~$s = 2$, this becomes
\begin{equation}
\label{eq:empty1}
  P (Z_2 > 2 \Lambda_2) \leq e^{- \Lambda_2 (1 - 2)} \,2^{- 2 \Lambda_2} = (4/e)^{- \Lambda_2}.
\end{equation}
 Now, notice that all the entry points at the bottom or around the periphery of~$\B^2$ are at space-time distance at least~$L_2$ of block~$\A^2$, therefore
\begin{equation}
\label{eq:empty2}
\xi \cap \A^2 \neq \varnothing \ \Longrightarrow \ \xi^{(x, t)} \not \subset (x, t) + ([-L_2, L_2]^d \times [0, L_2])
\end{equation}
 for at least one entry point~$(x, t)$.
 Combining~\eqref{eq:empty1}--\eqref{eq:empty2} and Lemma~\ref{lem:decay}, we get
 $$ \begin{array}{rcl}
      P (\xi \cap \A^2 \neq \varnothing) & \n \leq \n &
      P (Z_2 > 2 \Lambda_2) + P (\xi \cap \A^2 \neq \varnothing \,| \,Z_2 \leq 2 \Lambda_2) \vspace*{4pt} \\ & \n \leq \n &
      (4/e)^{- \Lambda_2} + ((4L_2 + 1)^d + 2 \Lambda_2) P (\xi^{(0, 0)} \not \subset [-L_2, L_2]^d \times [0, L_2]) \vspace*{4pt} \\ & \n \leq \n &
      (4/e)^{- \Lambda_2} + ((4L_2 + 1)^d + 2 \Lambda_2) \,C_2 s_2^{-L_2},
    \end{array} $$
 which can be made less than~$\ep_2$ for all~$L_2$ large.
\end{proof} \vspace*{8pt} \\
 To deduce an exponential decay of the population from the previous lemma, we now couple the process properly rescaled in space and time with oriented site percolation.
 Let~$\Lat_2 = \Z^d \times \N$, which we turn into a directed graph~$\vec \Lat_2$ by putting arrows
 $$ \begin{array}{rcl} (m, n) \to (m', n') & \Longleftrightarrow & |m_1 - m_1'| + \cdots + |m_d - m_d'| + |n - n'|= 1 \ \ \hbox{and} \ \ n \leq n'. \end{array} $$
 In other words, starting from each site~$(m, n)$, there are~$2d$ horizontal arrows and one vertical arrow going upward.
 The percolation process with parameter~$1 - \ep_2$ is obtained by assuming that each site~$(m, n)$ is open with probability~$1 - \ep_2$ and closed with probability~$\ep_2$, and we say that a site is wet if it can be reached from a directed path of open sites starting at level~$n = 0$.
 Returning to the interacting particle system, we introduce the collection of space-time blocks
 $$ \A_{m, n}^2 = (2mL_2, nL_2) + \A^2 \quad \hbox{and} \quad \B_{m, n}^2 = (2mL_2, nL_2) + \B^2 $$
 for all~$(m, n) \in \Lat_2$ and the collection of events
 $$ E_{m, n}^2 = \{\xi_t (x) = 0 \ \hbox{for all} \ (x, t) \in \A_{m, n}^2 \} = \hbox{block~$\A_{m, n}^2$ is empty} $$
 for all~$(m, n) \in \Lat_2$.
 Because the distribution of the graphical representation is translation-invariant in space and time, Lemma~\ref{lem:empty} applies to all these events.
 More precisely, there are good events~$G_{m, n}^2$ that only depend on the graphical representation in~$\B_{m, n}^2$ such that, for all~$\ep_2 > 0$, the scale parameter~$L_2 < \infty$ can be chosen in such a way that
 $$ P (G_{m, n}^2) \geq 1 - \ep_2 \quad \hbox{and} \quad G_{m, n}^2 \subset E_{m, n}^2 \quad \hbox{for all} \quad p < 1/4d. $$
 In particular, calling~$(m, n)$ a 2-good site if~$E_{m, n}^2$ occurs, and observing that
 $$ \B_{m, n}^2 \cap \B_{m', n'}^2 = \varnothing \quad \hbox{whenever} \quad |m - m'| > 2 \quad \hbox{or} \quad |n - n'| > 2, $$
 we obtain the following proposition.
\begin{proposition}
\label{prop:decay}
 For all~$\ep_2 > 0$ and~$p < 1/4d$, there exists~$L_2 < \infty$ such that the set of 2-good sites dominates the set of open sites in a 2-dependent oriented site percolation process on the directed graph~$\vec \Lat_2$ with parameter~$1 - \ep_2$.
\end{proposition}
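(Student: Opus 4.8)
The plan is to carry out the usual comparison with oriented site percolation, taking the empty-block estimate of Lemma~\ref{lem:empty} as the single-block input; at this point essentially all of the probabilistic work has been done, so what is left is bookkeeping. Concretely, I would declare a site $(m,n) \in \Lat_2$ to be \emph{open} precisely when the good event $G_{m,n}^2$ occurs, thereby building the percolation process directly out of the graphical representation of the particle system. Two things then have to be verified: that this random field of open sites is 2-dependent, and that each site is open with probability at least $1 - \ep_2$; the domination statement itself then follows from the inclusion $G_{m,n}^2 \subset E_{m,n}^2$.

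For the 2-dependence, the key point is the one already recorded before the statement: $G_{m,n}^2$ is measurable with respect to the Poisson clocks sitting inside the block $\B_{m,n}^2$ alone, and $\B_{m,n}^2 \cap \B_{m',n'}^2 = \varnothing$ whenever $|m - m'| > 2$ or $|n - n'| > 2$. Since the clocks attached to disjoint space-time regions are independent, the events $G_{m,n}^2$ and $G_{m',n'}^2$ are independent as soon as $(m,n)$ and $(m',n')$ differ by more than 2 in some coordinate, which is exactly 2-dependence. The density bound comes directly from Lemma~\ref{lem:empty}: applying it to the space-time translate $\B_{m,n}^2 = (2mL_2, nL_2) + \B^2$ of $\B^2$ --- which is legitimate because the law of the graphical representation is invariant under these translations --- and choosing $L_2$ large gives $P(G_{m,n}^2) \geq 1 - \ep_2$ for every $(m,n)$ and every $\lambda \in [0, \infty]$, since the constants produced in Lemmas~\ref{lem:decay}--\ref{lem:empty} do not depend on the birth rate. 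Finally, on $G_{m,n}^2$ the block $\A_{m,n}^2$ is empty, that is, $E_{m,n}^2$ holds and $(m,n)$ is 2-good; equivalently, the set of 2-good sites dominates the set of open sites, which is the assertion.

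Because the conclusion is stated for a 2-dependent percolation process and not an independent one, no reduction in the spirit of \cite[Theorem~A.4]{durrett_1995} is actually needed --- the percolation process simply \emph{is} the field of good events. There is essentially no obstacle left at this stage; the genuine difficulty was already faced in Lemma~\ref{lem:empty}, namely making the probability that a space-time block is empty close to one \emph{uniformly in the birth rate}, which itself rested on the domination of the progeny of a single $1$ by a subcritical Galton-Watson tree (Lemmas~\ref{lem:offspring}--\ref{lem:progeny}) and the resulting exponential decay in space and time (Lemma~\ref{lem:decay}). The one subtlety worth flagging is the direction of the domination: unlike Proposition~\ref{prop:contact}, where the good sites had to \emph{contain} the wet sites so that a supercritical cluster forces survival, here the open (good) sites must be \emph{contained} in the 2-good (empty-block) sites, so that later the presence of a $\pm 1$ at a space-time point at height of order $n$ can be read off as an oriented path of closed sites of length at least $n$, an event of exponentially small probability once $\ep_2$ is taken small.
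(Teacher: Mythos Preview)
Your proposal is correct and matches the paper's own argument essentially verbatim: the paper also obtains the proposition directly from the preceding setup, namely the existence of good events $G_{m,n}^2$ measurable with respect to $\B_{m,n}^2$ alone, the bound $P(G_{m,n}^2)\geq 1-\ep_2$ from Lemma~\ref{lem:empty} and translation invariance, the inclusion $G_{m,n}^2\subset E_{m,n}^2$, and the disjointness $\B_{m,n}^2\cap\B_{m',n'}^2=\varnothing$ for $|m-m'|>2$ or $|n-n'|>2$. Your observation that no appeal to \cite[Theorem~A.4]{durrett_1995} is needed here---because the percolation field simply \emph{is} the field of good events---is exactly the reason the paper does not cite it for this proposition, in contrast to Proposition~\ref{prop:contact}.
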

\noindent
 To deduce an exponential decay from the previous proposition, the last step is to prove the lack of percolation of the closed sites when~$\ep_2$ is small enough.
\begin{lemma}
\label{lem:perco}
 Let~$\ep_2 = (4d + 2)^{-5^{d + 1}}$.
 Then, the probability of a directed path of closed sites of length~$n$ starting from the origin is bounded by~$(1/2)^n$.
\end{lemma}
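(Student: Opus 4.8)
The plan is to run the standard contour argument for $k$-dependent oriented percolation, in the spirit of \cite[Theorem~A.1]{durrett_1995}. Recall from Proposition~\ref{prop:decay} that the percolation field on $\vec\Lat_2$ is $2$-dependent and that each site is closed with probability at most $\ep_2$. Since every vertex of $\vec\Lat_2$ has exactly $2d+1$ outgoing arrows, there are at most $(2d+1)^n$ directed paths of length $n$ starting from the origin, so by a union bound the probability in the statement is at most $(2d+1)^n\,\max_\gamma P(\text{all vertices of }\gamma\text{ are closed})$, the maximum being over directed paths $\gamma$ of length $n$ from the origin. It therefore suffices to prove that $P(\text{all vertices of }\gamma\text{ closed})\le (4d+2)^{-n}$ for each such $\gamma$: the bound then collapses to $\big((2d+1)/(4d+2)\big)^n=(1/2)^n$, which also explains the choice $\ep_2=(4d+2)^{-5^{d+1}}$.

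To estimate $P(\text{all vertices of }\gamma\text{ closed})$ for a fixed path $\gamma=(v_0,\dots,v_n)$, the key step is to extract from its vertices a large subset on which the events ``site closed'' are independent. Regarding $\vec\Lat_2\subset\Z^{d+1}$, color each vertex $v$ by the vector of its coordinates taken modulo $5$, i.e.\ by $v\bmod 5\in(\Z/5\Z)^{d+1}$, which uses $5^{d+1}$ colors. Two distinct vertices of the same color differ by a nonzero multiple of $5$ in some coordinate, so their indices differ by more than $2$ in space or in time; consequently the associated blocks are disjoint — recall $\B_{m,n}^2\cap\B_{m',n'}^2=\varnothing$ whenever $|m-m'|>2$ or $|n-n'|>2$ — and, since the status of a site depends only on the graphical representation inside its block, any family of pairwise monochromatic vertices gives rise to mutually independent ``closed'' events. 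The $n+1$ vertices of $\gamma$ are distinct (the directed paths that matter for the application drop exactly one time level at each step, hence are self-avoiding), so by the pigeonhole principle some color class contains at least $\lceil (n+1)/5^{d+1}\rceil\ge n/5^{d+1}$ of them, say $w_1,\dots,w_t$ with $t\ge n/5^{d+1}$. Using independence and $\ep_2<1$,
$$ P(\text{all vertices of }\gamma\text{ closed})\ \le\ \prod_{j=1}^{t}P(w_j\text{ closed})\ \le\ \ep_2^{\,t}\ \le\ \ep_2^{\,n/5^{d+1}}\ =\ (4d+2)^{-n}, $$
and plugging this into the union bound gives the announced estimate $(2d+1)^n(4d+2)^{-n}=(1/2)^n$.

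The point requiring the most care is the independence used in the extraction step: one must verify that $2$-dependence (two families of site variables at $\ell^\infty$-distance greater than $2$ being independent) upgrades to genuine mutual independence of $\{w_j\text{ closed}\}_{j\le t}$ once the $w_j$ are pairwise at distance at least $5$, which is obtained by peeling off one $w_j$ at a time. A second, milder subtlety is that ``directed path'' must be read as self-avoiding — equivalently, the relevant paths produced by the block construction are monotone in the time coordinate — because with horizontal backtracking allowed the pigeonhole step would not produce $n/5^{d+1}$ distinct vertices; this causes no trouble since a path of bad blocks descends by exactly one level per step.
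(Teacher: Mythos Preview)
Your argument is essentially the paper's own: a union bound over the at most $(2d+1)^n$ self-avoiding directed paths from the origin, then from each path extract at least $n/5^{d+1}$ vertices at pairwise $\ell^\infty$-distance $>2$ and use $2$-dependence to get $P(\text{path closed})\le \ep_2^{\,n/5^{d+1}}=(4d+2)^{-n}$. Your mod-$5$ coloring of $\Z^{d+1}$ makes explicit the extraction step that the paper asserts in one line without detail.

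One slip to correct: in $\vec\Lat_2$ the arrows satisfy $n'\ge n$, and $2d$ of the $2d+1$ outgoing arrows from each vertex are \emph{horizontal}; directed paths therefore neither ``descend'' nor move ``exactly one level per step,'' and they are not monotone in the time coordinate. Your stated reason for self-avoidance is thus wrong as written. The paper simply restricts to self-avoiding paths in its proof (see the displayed count of self-avoiding paths), and for the applications this is harmless: whenever a closed directed connection exists from level $0$ to a target site, it contains a self-avoiding closed directed path whose length is at least the graph distance, which is all that is used at the end of Section~\ref{sec:extinction} and in Lemma~\ref{lem:2-decay}.
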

\begin{proof}
 Because there are~$2d + 1$ arrows starting from each site,
\begin{equation}
\label{eq:perco1}
\hbox{\#\,self-avoiding paths of length~$n$ starting from the origin} \leq (2d + 1)^n.
\end{equation}
 In addition, from each self-avoiding path~$\vec \pi = (\pi_1, \pi_2, \ldots, \pi_n) \subset \Lat_2$ of length~$n$, one can extract a subset of~$n / 5^{d + 1}$ sites that are distance~$> 2$ apart, therefore, by~2-dependence,
\begin{equation}
\label{eq:perco2}
  P (\vec \pi \ \hbox{is closed}) \leq \ep_2^{n / 5^{d + 1}} \leq (4d + 2)^{-n}.
\end{equation}
 Combining~\eqref{eq:perco1}--\eqref{eq:perco2}, we deduce that the probability of a directed path of closed sites of length~$n$ starting from the origin is bounded by
 $$ (2d + 1)^n \,P (\vec \pi \ \hbox{is closed}) \leq (2d + 1)^n / (4d + 2)^n = (1/2)^n. $$
 This proves the lemma.
\end{proof} \vspace*{8pt} \\
 To deduce Theorem~\ref{th:extinction}, fix~$\ep_2$ like in Lemma~\ref{lem:perco}, then~$L_2 < \infty$ like in Proposition~\ref{prop:decay}.
 By the proposition, because individuals can't appear spontaneously, the presence of a~$\pm 1$ in space-time block~$\A_{m, n}^2$ implies the existence of a path of closed sites leading to~$(m, n)$ in the percolation process.
 Now, by the lemma, the probability that such a path exists decays exponentially with~$n$, from which it follows that, for all~$p < 1/4d$, there is an exponential decay of the~$\pm 1$s.


\section{The multitype process}
\label{sec:mcp}
 This section is devoted to the proof of Theorem~\ref{th:mcp} about the multitype process.
 The basic idea is to use the exponential decay of the~$\pm 2$s to prove the existence of a linearly growing cluster of~$\pm 1$s that does not interact with the surrounding~$\pm 2$s.
 In particular, the proof relies on Theorems~\ref{th:survival}--\ref{th:extinction} and their proofs.
 From now on, the process~$\xi$ refers to the multitype process constructed from the graphical representation described in Section~\ref{sec:coupling}, starting from a translation-invariant product measure with a positive density of both types.
 The parameters are given by
 $$ \lambda_1 > \lambda_c \quad \hbox{and} \quad p_1 = 1 \quad \hbox{and} \quad p_2 < 1/4d, $$
 while the birth rate~$\lambda_2$ is arbitrary.
 Because the~$\pm 2$s die out in the absence of~$\pm 1$s according to Theorem~\ref{th:extinction}, and they are now blocked by the surrounding~$\pm 1$s, they again die out.
 However, even if the~$\pm 1$s survive in the absence of~$\pm 2$s according to Theorem~\ref{th:survival}, because they are now blocked by the surrounding~$\pm 2$s, their survival is unclear.
 To prove that the~$\pm 1$s win, we let
\begin{itemize}
\item $\ep_1 = 6^{-4 \times 13^2} > 0$ then~$L_1$ such that Proposition~\ref{prop:survive} holds for this~$\ep_1$, \vspace*{4pt}
\item $\ep_2 = (4d + 2)^{-5^{d + 1}} > 0$ then~$L_2$ such that Proposition~\ref{prop:decay} holds for this~$\ep_2$.
\end{itemize}
 We may also assume that~$L_1 = L_2$ since the propositions hold for all~$L_1, L_2$ large.
 From ow on, we denote by~$L$ their common value.
 We also let~$K$ be an integer to be fixed later, and consider two collections of single-type processes
 $$ \{\eta^{z, 1} : z \in \Z^d \} \quad \hbox{and} \quad \{\eta^{z, 2} : z \in \Z^d \} $$
 constructed from the same graphical representation as~$\xi$ but starting from
 $$ \begin{array}{rcl}
    \eta_0^{z, 1} (x) & \n = \n & \left\{\begin{array}{cl} \xi_0 (x) & \hbox{if} \ \xi_0 (x) = \pm 1 \ \hbox{and} \ x \in 2zL + [-L, L]^d, \vspace*{4pt} \\
                                                               0     & \hbox{otherwise}, \end{array} \right. \vspace*{8pt} \\
    \eta_0^{z, 2} (x) & \n = \n & \left\{\begin{array}{cl} \xi_0 (x) & \hbox{if} \ \xi_0 (x) = \pm 2 \ \hbox{and} \ x \notin 2zL + [-KL, KL]^d, \vspace*{4pt} \\
                                                               0     & \hbox{otherwise}. \end{array} \right. \end{array} $$
 We also define another collection of processes~$\bar \eta^{z, 1}$ as follows: letting
 $$ \begin{array}{rcl}
    \nabla_{z, 1} & \n = \n & \{(m, n) \in \Lat_1 : \hbox{there is a path~$(2z, 0) \to (m, n)$ in the graph~$\vec \Lat_1$} \} \vspace*{4pt} \\
                  & \n = \n & \{(m, n) \in \Lat_1 : m \in 2z + [-n, n]^d \}, \end{array} $$
 the process~$\bar \eta^{z, 1}$ is the process~$\eta^{z, 1}$ modified so that the~1s outside
 $$ \nabla_z = \bigcup_{(m, n) \in \nabla_{z, 1}} (mL, nL^2) + ([-3L, 3L]^d \times [0, L^2]) $$
 are instantaneously killed.
 Figure~\ref{fig:cone} shows a picture of the sets~$\nabla_{z, 1}$ and~$\nabla_z$.
\begin{figure}[t!]
\centering
\scalebox{0.40}{\input{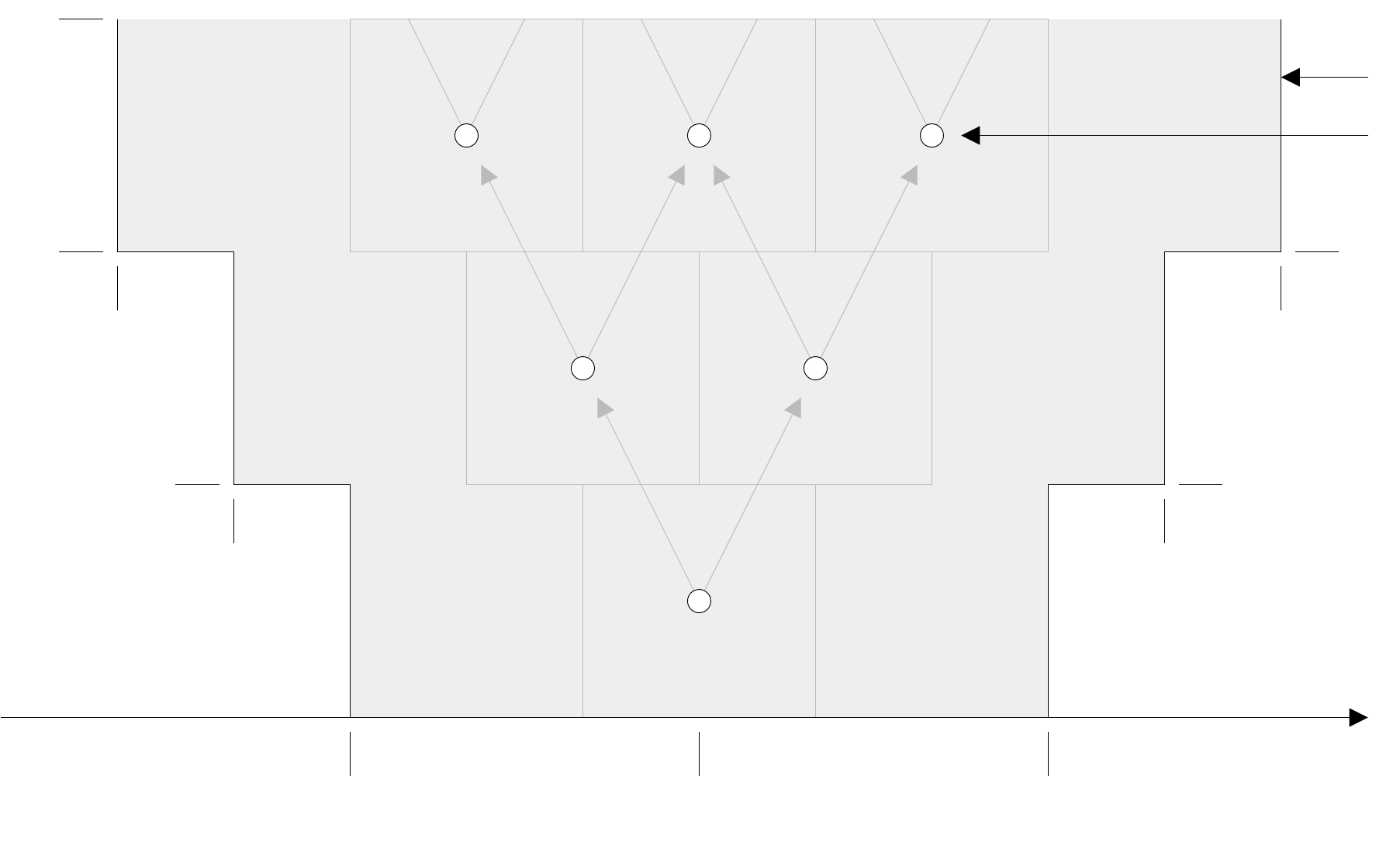_t}}
\caption{\upshape{Picture of the sets~$\nabla_{z, 1}$ and~$\nabla_z$.}}
\label{fig:cone}
\end{figure}
 To study the processes~$\eta^{z, 2}$, we also define the sets
 $$ \nabla_{z, 2} = \{(m, n) \in \Lat_2 : (mL, nL) \in \nabla_z \}. $$
 Having a realization of the graphical representation of~$\xi$ and a realization of its initial configuration, we say that a site~$z \in \Z^d$ is a~1-source if the following three events occur:
 $$ \begin{array}{rcl}
      A_z & \n = \n & \{\bar \eta_t^{z, 1} \neq \varnothing \ \forall \,t \}, \vspace*{4pt} \\
      B_z & \n = \n & \{\eta^{z, 2} \cap \nabla_z = \varnothing \}, \vspace*{4pt} \\
      C_z & \n = \n & \{\xi_0 (x) \neq \pm 2 \ \forall \,x \in 2zL + [-KL, KL]^d \}. \end{array} $$
 In words, the process restricted to~$\nabla_z$ that keeps track of the~1s survives, the process that keeps track of the~2s does not come in contact with~$\nabla_z$, and the multitype contact process starts from a configuration that has no~$\pm 2$s in the spatial region~$2zL + [-KL, KL]^d$.
 By translation invariance, the probabilities~$P (C_z)$ do not depend on~$z$ and are positive.
 The next two lemmas show that the same property is satisfied for the events~$A_z$ and~$B_z$.
\begin{lemma}
\label{lem:1-expand}
  $P (A_z) > 0$.
\end{lemma}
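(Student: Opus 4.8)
The plan is to run the block construction of Section~\ref{sec:survival} inside the forward light cone $\nabla_{z,1}$ and to show that, with positive probability, the induced oriented percolation survives there. Since both the graphical representation and the initial product measure are invariant under $\Z^d$-translations, it suffices to treat $z=0$. For $(m,n)\in\nabla_{0,1}$ I would call $(m,n)$ a $1$-good site if the event $E^1_{m,n}$, read off for the killed process $\bar\eta^{0,1}$, occurs. First I would check that the propagation rule $E^1_{m,n}\cap\bar G^1_{m,n}\Rightarrow E^1_{m',n'}$ of \eqref{eq:survive1} remains valid for $\bar\eta^{0,1}$ when $(m,n)\in\nabla_{0,1}$: in that case $\B^1_{m,n}\subset\nabla_0$ and $\A^1_{m',n'}\subset\B^1_{m,n}$ for every child $(m',n')\leftarrow(m,n)$, so the killing of $1$s outside $\nabla_0$ never affects the space-time region that this implication consults, and $\bar G^1_{m,n}$ still depends only on the graphical representation inside $\B^1_{m,n}$. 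Since $p_1=1$, no sterile $1$'s are ever produced, so on the base event below $\bar\eta^{0,1}$ is simply a contact process killed outside $\nabla_0$, and the estimates of Section~\ref{sec:survival} apply verbatim.

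Next I would separate the two sources of randomness. The base event $E^1_{0,0}$, read off for $\bar\eta^{0,1}$, only involves the configuration at time $0$ inside $[-L,L]^d$, because $\bar\eta^{0,1}_0$ vanishes elsewhere; explicitly $E^1_{0,0}=\{\xi_0 \text{ has no } -1 \text{ in } [-L,L]^d \text{ and at least one } +1 \text{ in each small cube of } [-L,L]^d\}$. As the starting measure is a product measure with a positive density of $+1$s, the event $\{\xi_0(x)=+1 \text{ for all } x\in[-L,L]^d\}$ has positive probability, whence $P(E^1_{0,0})>0$. On the other hand, the events $\bar G^1_{m,n}$ depend only on the Poisson clocks of the graphical representation, hence are independent of $\xi_0$ and in particular of $E^1_{0,0}$.

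Then I would invoke the comparison exactly as at the end of Section~\ref{sec:survival}. By Proposition~\ref{prop:survive} (applicable since $p_1=1>p_+$) the set of $1$-good sites dominates the set of wet sites of a $6$-dependent oriented site percolation on $\vec\Lat_1$ with parameter $1-\ep_1$, and by Lemma~\ref{lem:contour} this percolation is supercritical, so on a positive-probability event $H$, measurable with respect to the graphical representation, the origin of $\vec\Lat_1$ lies in an infinite cluster of wet sites. Because every arrow of $\vec\Lat_1$ raises the level by one and moves one coordinate by one unit, that cluster is contained in $\nabla_{0,1}$. On $E^1_{0,0}\cap H$ the propagation rule pushes $1$-goodness along this infinite cluster inside the cone, so $E^1_{m,n}$ holds for sites $(m,n)\in\nabla_{0,1}$ at every level $n$; since $E^1_{m,n}$ forces a $+1$ of $\bar\eta^{0,1}$ inside $\A^1_{m,n}\subset\nabla_0$ at time $nL^2$, and $\bar\eta^{0,1}$ cannot become nonempty again once empty, we get $\bar\eta^{0,1}_t\neq\varnothing$ for all $t$, that is, $A_0$ occurs. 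By the independence noted above, $P(A_0)\geq P(E^1_{0,0})\,P(H)>0$, and translation invariance yields $P(A_z)=P(A_0)>0$.

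The main obstacle I anticipate is purely one of bookkeeping: legitimizing the transplantation of the construction of Proposition~\ref{prop:survive}, originally run on all of $\vec\Lat_1$, to the cone. Concretely, one must verify both that the implication $E^1_{m,n}\cap\bar G^1_{m,n}\Rightarrow E^1_{m',n'}$ never references a block lying outside $\nabla_0$ when $(m,n)\in\nabla_{0,1}$, and that the wet cluster emanating from the origin stays inside $\nabla_{0,1}$, so that the artificial killing in $\bar\eta^{0,1}$ is irrelevant on the event $H$ that drives survival. All the remaining ingredients -- the contour estimate and the $6$-dependent comparison theorem -- are already recorded in Lemma~\ref{lem:contour} and in the discussion preceding Proposition~\ref{prop:contact}.
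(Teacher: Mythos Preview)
Your proposal is correct and follows essentially the same approach as the paper's proof: seed the block construction at $(2z,0)$ using the positive density of $+1$s in the initial product measure, observe that the Durrett--Neuhauser propagation rule survives the restriction to the cone $\nabla_z$, and invoke supercriticality of the dominated oriented percolation to conclude. The paper handles the cone restriction by a one-line citation to \cite[Theorem~2]{durrett_neuhauser_1997}, whereas you spell out explicitly that $\B^1_{m,n}\subset\nabla_0$ and that the percolation cluster from the origin stays in $\nabla_{0,1}$; your separation of the initial-configuration event $E^1_{0,0}$ from the graphical-representation event $H$ is likewise more explicit than the paper's conditioning, but the substance is identical.
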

\begin{proof}
 Starting with a positive density of~1s, site~$(2z, 0)$ is 1-good with positive probability.
 In addition, it follows from the proof of~\cite[Theorem~2]{durrett_neuhauser_1997} and Theorem~\ref{th:survival} that, even if the~1s outside the space-time region~$\nabla_z$ are instantaneously killed, the set of 1-good sites dominates the set of wet sites in the percolation process of Proposition~\ref{prop:survive}.
 In particular, letting~$\C_{2z}$ be the set of~$(m, n) \in \Lat_1$ that can be reached from~$(2z, 0)$ by an open path, our choice of~$\ep_1$ implies that
 $$ \begin{array}{rcl}
      P (A_z) & \n \geq \n & P (A_z \,| \,(2z, 0) \ \hbox{is 1-good}) P ((2z, 0) \ \hbox{is 1-good}) \vspace*{4pt} \\
              & \n   =  \n & P (\bar \eta_t^{z, 1} \neq \varnothing \ \forall \,t \,| \,(2z, 0) \ \hbox{is 1-good}) P ((2z, 0) \ \hbox{is 1-good}) \vspace*{4pt} \\
              & \n   =  \n & P (|\C_{2z}| = \infty) P ((2z, 0) \ \hbox{is 1-good}) > 0. \end{array} $$
 This proves the lemma.
\end{proof}
\begin{lemma}
\label{lem:2-decay}
 There exists~$K < \infty$ such that~$P (B_z) > 0$.
\end{lemma}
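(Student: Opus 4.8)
The idea is that $\eta^{z,2}$ is nothing but a single-type process run with the parameters $\lambda_2$ and $p_2 < 1/4d$, so the exponential-decay estimate of Lemma~\ref{lem:decay} is available for it, uniformly in $\lambda_2 \in [0,\infty]$; one then only has to check that a type-2 population that starts empty in a huge box around $2zL$ is very unlikely to reach the slowly growing cone $\nabla_z$. Since the statement only requires $P(B_z) > 0$, and since both the graphical representation and the initial product measure are translation invariant, I would reduce to $z = 0$ and show that $P(\eta^{0,2} \cap \nabla_0 \neq \varnothing) < 1$ once $K$ is large.

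First I would record the elementary geometric fact that every $(y,t) \in \nabla_0$ satisfies $\norm{y}_{\infty} \le t/L + 3L$: indeed a point of $\nabla_0$ lies in $mL + [-3L,3L]^d$ at a time in $[nL^2, (n+1)L^2]$ with $\norm{m}_{\infty} \le n \le t/L^2$. Thus $\nabla_0$ is a cone anchored at the spatial origin that spreads at the slow speed $1/L$ in real time. Next, exactly as in the proof of Lemma~\ref{lem:empty}, I would dominate $\eta^{0,2}$ by the superposition of the non-interacting processes started from a single $\pm 2$ at $x$, one for each site $x \notin [-KL,KL]^d$ carrying a $\pm 2$ in $\xi_0$, so that by a union bound $P(\eta^{0,2} \cap \nabla_0 \neq \varnothing)$ is at most the sum over such $x$ of the probability that the single-particle process started at $x$ ever meets $\nabla_0$.

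The heart of the argument is the single-particle estimate. Fix $x$ with $r := \norm{x}_{\infty} > KL$ and set $n = \lceil r/4 \rceil$, where $K$ is taken large enough that $r$ exceeds a fixed multiple of $L$. If the single-particle process started at $x$ reaches some $(y,t) \in \nabla_0$, then $\norm{x-y}_{\infty} \ge r - t/L - 3L$, which is $\ge n$ as long as $t \le n$; and if instead $t > n$, the process has merely survived beyond time $n$. In either case the process fails to be contained in $(x + [-n,n]^d) \times [0,n]$, so by Lemma~\ref{lem:decay} (translated to start at $x$) this probability is at most $C_2 s_2^{-n} \le C_2 s_2^{-r/4}$. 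Summing over $x$ with the crude bound $\card\{x : \norm{x}_{\infty} = r\} \le (2r+1)^d$ gives $P(\eta^{0,2} \cap \nabla_0 \neq \varnothing) \le \sum_{r > KL}(2r+1)^d C_2 s_2^{-r/4}$; since $s_2 > 1$ this series converges, its tail tends to $0$ as $K \to \infty$, and for $K$ large it is $< 1$. Hence $P(B_0) > 0$, and translation invariance gives $P(B_z) > 0$ for all $z$ with the same $K$.

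There is no genuine obstacle here; the one point to watch is the bookkeeping that turns ``the single-particle process meets $\nabla_0$'' into ``it escapes a space-time box of comparable side and height,'' so that Lemma~\ref{lem:decay} applies verbatim. This uses precisely that the cone grows slowly against a time variable measured in units of $L^2$: an early visit to the cone demands a large spatial excursion, while a late visit is already penalized by the time-decay half of Lemma~\ref{lem:decay}. One must also be a little careful to extract a bound that decays in $r = \norm{x}_{\infty}$, not merely in $K$, so that the sum over the infinitely many initial $\pm 2$'s converges.
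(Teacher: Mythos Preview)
Your argument is correct, but it takes a different route from the paper.  The paper works entirely through the block construction: by Proposition~\ref{prop:decay} the set of empty blocks dominates the open sites in a $2$-dependent percolation process on~$\vec\Lat_2$, so a~$\pm2$ reaching~$\nabla_z$ forces a directed path of closed sites from some~$(x,0)$ with~$x\notin 2z+[-K,K]^d$ (block coordinates) into~$\nabla_{z,2}$; the slow growth of the cone makes such a path have length at least~$\norm{x-2z}_\infty-3$, and Lemma~\ref{lem:perco} then gives the bound~$(1/2)^{\norm{x-2z}_\infty-3}$, which is summed over the spheres~$S(2z,r)$ for~$r>K$.

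You bypass the percolation coupling altogether and apply the microscopic single-particle estimate of Lemma~\ref{lem:decay} directly, via the same superposition domination used in Lemma~\ref{lem:empty}.  The geometric core is identical---the cone spreads at speed~$1/L$, so a particle starting at distance~$r$ either makes a spatial excursion of order~$r$ or survives for time of order~$r$---but you carry it out at the lattice scale rather than the block scale.  Your approach is more elementary and needs less machinery; the paper's approach has the virtue of reusing exactly the framework already assembled for Theorem~\ref{th:extinction}, with the explicit percolation constant~$\ep_2$ doing the work, and it sums over~$r>K$ in block units rather than~$r>KL$ in lattice units.  Both give the same conclusion with the same summability mechanism.
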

\begin{proof}
 If~$\eta^{z, 2}$ comes in contact with~$\nabla_z$, there must be a path of closed sites
\begin{equation}
\label{eq:2-decay-1}
  (x, 0) \to \nabla_{z, 2} \quad \hbox{for some} \quad x \notin 2z + [-K, K]^d
\end{equation}
 in the percolation process introduced in Proposition~\ref{prop:decay}.
 Because~$L_1 = L_2 = L$, this path must have length at least~$\norm{x - 2z}_{\infty} - 3$, so Lemma~\ref{lem:perco} implies that
\begin{equation}
\label{eq:2-decay-2}
  P (\hbox{closed path~$(x, 0) \to \nabla_{z, 2}$}) \leq (1/2)^{\norm{x - 2z}_{\infty} - 3}.
\end{equation}
 Also, the number of vertices on the sphere~$S (2z, r)$ is bounded by
\begin{equation}
\label{eq:2-decay-3}
\card (S (2z, r)) = \card \{x \in \Z^d : \norm{x - 2z}_{\infty} = r \} \leq 2d (2r + 1)^{d - 1}.
\end{equation}
 Combining~\eqref{eq:2-decay-1}--\eqref{eq:2-decay-3}, we deduce that
 $$ \begin{array}{rcl}
      P (B_z^c) & \n = \n & \displaystyle P (\eta^{z, 2} \cap \nabla_z \neq \varnothing) \leq
                            \displaystyle \sum_{x \notin 2z + [- K, K]^d} P (\hbox{closed path~$(x, 0) \to \nabla_{z, 2}$}) \vspace*{4pt} \\
                & \n = \n & \displaystyle \sum_{r > K} \ \sum_{x \in S (2z, r)} P (\hbox{closed path~$(x, 0) \to \nabla_{z, 2}$}) \vspace*{8pt} \\
                & \n \leq \n & \displaystyle \sum_{r > K} \ 2d (2r + 1)^{d - 1} (1/2)^{r - 3}. \end{array} $$
 Because the last sum is less than one for~$K < \infty$ large, the lemma follows.
\end{proof} \vspace*{8pt} \\
 To deduce the theorem, the last step is to prove the existence of a~1-source.
 In fact, combining the previous two lemmas, we obtain the existence of infinitely many.
\begin{lemma}
\label{lem:1-source}
 With probability one, $\card \{z \in \Z^d : z \ \hbox{is a 1-source} \} = \infty$.
\end{lemma}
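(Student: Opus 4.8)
The plan is to prove that 1-sources occur with positive density, which in particular forces their number to be infinite. This proceeds in two steps: first showing that a single prescribed site is a 1-source with positive probability, and then upgrading this to a positive density by an ergodic argument.

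For the upgrade, observe that the randomness driving~$\xi$ --- the initial configuration (a translation-invariant product measure) together with the graphical representation (a family of independent Poisson processes) --- forms a system that is mixing, hence ergodic, under the group of spatial translations, and this persists on the sublattice~$(2L\Z)^d$. Since~$\nabla_z = \nabla_0 + (2zL, 0)$, the events~$A_z$,~$B_z$,~$C_z$ are the images of~$A_0$,~$B_0$,~$C_0$ under the spatial shift by~$2zL$, so the event that~$z$ is a 1-source is the shift by~$2zL$ of the event that~$0$ is a 1-source. The multidimensional pointwise ergodic theorem then gives
$$ \frac{1}{(2N + 1)^d} \sum_{z \in [- N, N]^d \cap \Z^d} \mathbf 1 \{z \ \hbox{is a 1-source} \} \ \longrightarrow \ P (A_0 \cap B_0 \cap C_0) \quad \hbox{almost surely as} \ N \to \infty, $$
so it suffices to show that~$P (A_0 \cap B_0 \cap C_0) > 0$.

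To prove this, condition on the initial configuration~$\xi_0$. The event~$C_0$ depends only on~$\xi_0$, so it is enough to exhibit a set of initial configurations of positive probability on which~$A_0$ and~$B_0$ are conditionally independent given~$\xi_0$ and both have positive conditional probability. The conditional independence holds because, given~$\xi_0$, the event~$A_0$ is measurable with respect to the type-$1$ arrows and death marks inside~$\nabla_0$ --- the process~$\bar \eta^{0, 1}$ being confined to~$\nabla_0$ --- whereas~$B_0$ is measurable with respect to the type-$2$ arrows and death marks outside~$\nabla_0$: on~$B_0$ the process~$\eta^{0, 2}$ never enters~$\nabla_0$, and detecting such an incursion only requires the marks outside~$\nabla_0$ because the time-slices of~$\nabla_0$ form a non-decreasing family. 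Now take~$\xi_0$ in the positive-probability event~$C_0 \cap D_0$, where~$D_0 = \{(0, 0) \ \hbox{is a 1-good site} \}$ (an event depending only on~$\xi_0$). Arguing as in the proof of Lemma~\ref{lem:1-expand}, $P (A_0 \mid \xi_0)$ is then bounded below by the ($\xi_0$-independent) probability that the oriented percolation cluster of Proposition~\ref{prop:survive} started at the origin is infinite, which is positive by our choice of~$\ep_1$; and, by the estimate in the proof of Lemma~\ref{lem:2-decay}, for~$K$ large~$P (B_0 \mid \xi_0) \geq 1 - \sum_{r > K} 2d (2r + 1)^{d - 1} (1/2)^{r - 3} > 0$, uniformly in~$\xi_0$ since the obstructing closed-path events do not involve the initial configuration. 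Multiplying through, $P (A_0 \cap B_0 \cap C_0) > 0$, and the ergodic theorem yields infinitely many 1-sources almost surely.

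The main obstacle is to make the conditional independence rigorous, that is, to verify carefully which Poisson marks each of~$A_0$ and~$B_0$ reads. The two facts that make it work are that~$\bar \eta^{0, 1}$ is confined to~$\nabla_0$ by construction, and that, because the time-slices of~$\nabla_0$ form a non-decreasing family, any~$2$ of~$\eta^{0, 2}$ that ends up in~$\nabla_0$ must have gotten there either through an arrow with tail outside~$\nabla_0$ or while sitting at a site whose time-slice has just expanded to contain it; in both cases the marks consulted to detect the incursion lie outside~$\nabla_0$, disjoint from those read by~$A_0$.
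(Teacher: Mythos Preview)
Your proof is correct and follows the paper's two-step skeleton: establish $P(A_0 \cap B_0 \cap C_0) > 0$, then invoke the ergodic theorem. The difference lies in how independence is handled. The paper asserts outright that $A_z$, $B_z$, $C_z$ are mutually independent, claiming that $A_z$ and $B_z$ depend only on the graphical representation inside and outside $\nabla_z$ respectively, and then multiplies the three probabilities using Lemmas~\ref{lem:1-expand}--\ref{lem:2-decay}. You instead condition on $\xi_0$, making $C_0$ deterministic and reducing the question to conditional independence of $A_0$ and $B_0$ given $\xi_0$, which you justify by tracking separately the type-1 arrows and death marks inside $\nabla_0$ (read by $A_0$) versus the type-2 arrows and death marks outside $\nabla_0$ (read by $B_0$). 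This is arguably the more careful route, since $A_z$ \emph{does} depend on $\xi_0$ through the initial state of $\bar\eta^{z,1}$, so the paper's bare independence claim glosses over a point that your conditioning makes explicit. Both arguments reach the same conclusion; yours trades a line of brevity for a more explicit accounting of which Poisson marks each event consults.
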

\begin{proof}
 Note that the event~$A_z$, respectively, the event~$B_z$, only depends on the graphical representation inside~$\nabla_z$, respectively, outside~$\nabla_z$.
 In addition, the event~$C_z$ only depends on the initial configuration, which is independent of the graphical representation.
 In particular, all three events are independent, so it follows from Lemmas~\ref{lem:1-expand}--\ref{lem:2-decay} that
 $$ P (z \ \hbox{is a 1-source}) = P (A_z \cap B_z \cap C_z) = P (A_z) P (B_z) P (C_z) > 0. $$
 Since in addition the initial configuration and the graphical representation are translation-invariant, the ergodic theorem implies that, with probability one,
 $$ \frac{\card \{z \in [-n, n]^d : z \ \hbox{is a 1-source} \}}{(2n + 1)^d} \to P (z \ \hbox{is a 1-source}) > 0, $$
 as~$n \to \infty$.
 This implies the lemma.
\end{proof} \vspace*{8pt} \\
 To deduce Theorem~\ref{th:mcp}, fix a~1-source~$z$, which is possible by Lemma~\ref{lem:1-source}. \vspace*{5pt} \\
{\bf Extinction of the~$\pm 2$s}.
 The processes~$\xi$ and~$\eta^{z, 2}$ are coupled in such a way that~$\xi$ has less~$\pm 2$s than~$\eta^{z, 2}$.
 Intuitively, this is true because, on the event~$C_z$, the two processes start with the same configuration of~$\pm 2$s, but the~$\pm 2$s are blocked by the~$\pm 1$s in the process~$\xi$ but not in the process~$\eta^{z, 2}$.
 In addition, because~$p < 1/4d$, it follows from Theorem~\ref{th:extinction} that the process~$\eta^{z, 2}$ dies out, therefore the~$\pm 2$s also die out in the multitype process~$\xi$. \vspace*{5pt} \\
{\bf Survival of the~$\pm 1$s}.
 In general, the~$\pm 1$s in the processes~$\xi$ and~$\bar \eta^{z, 1}$ cannot be compared because~$\xi$ starts with more~$\pm 1$s but the~$\pm 1$s are blocked by the~$\pm 2$s in~$\xi$ but not in~$\bar \eta^{z, 1}$.
 However, because the events~$B_z$ and~$C_z$ occur, the~$\pm 2$s in the process~$\eta^{z, 2}$~(and so the~$\pm 2$s in~$\xi$) don't come in contact with~$\nabla_z$.
 This, together with the attractiveness of the contact process, shows that there are more~$\pm 1$s in the process~$\xi$ than in the process~$\bar \eta^{z, 1}$.
 Finally, because the event~$A_z$ occurs, the process~$\bar \eta^{z, 1}$ survives, therefore the~$\pm 1$s also survive in the multitype process~$\xi$.


\end{document}